\newtheorem{theorem}{Theorem}    
\newtheorem{proposition}[theorem]{Proposition}
\newtheorem{lemma}[theorem]{Lemma}
\newtheorem{remark}[theorem]{Remark}
\newtheorem{definition}[theorem]{Definition}
\theoremstyle{definition}
\numberwithin{theorem}{section} \numberwithin{theorem}{section}
\numberwithin{equation}{section}
\def\C{\mathbb{C}}
\def\F{\mathcal{F}}
\def\supp{\operatorname{supp}}
\begin{document}
\title[Boundedness of commutators for  multilinear Calder\'{o}n-Zygmund  operators ]
{Boundedness of commutators for  multilinear Calder\'{o}n-Zygmund  operators on Generalized Morrey Spaces}

\author{Fuli Ku}

\subjclass[2020]{
42B20; 42B25; 35J10.
}

%
\keywords{Calder\'{o}n-Zygmund operators,  commutators, generalized Campanato spaces, generalized Morrey spaces.}
\thanks{Supported by the NNSF of China (Nos.12171399, 11871101)}
\address{School of Mathematical Sciences, Xiamen University, Xiamen 361005, China} \email{kfl20180325@163.com}



\begin{abstract}
Let $T$ be a $m$-linear Calder\'{o}n-Zygmund operator of type $\omega$ with $\omega$ being nondecreasing and $\omega \in$ Dini(1) and $[\vec{b},\,T]$ be the commutator generated by $T$ with symbols $\vec{b}=(b_1,\,\ldots,\,b_m)$ belonging to  generalized Campanato spaces. We give necessary and  sufficient conditions for the boundedness of $[\vec{b},\,T]$ on generalized Morrey spaces with variable growth condition.
\end{abstract}

\maketitle

\section{Introduction }
It is well known that the boundedness of operators on function spaces  is a central topic
of harmonic analysis, which attracts a lot of attentions. In this paper, we will focus
on the boundedness of the commutators for the $m$-linear Calder\'{o}n-Zygmund operators of type $\omega$, which are defined as follow.
\begin{definition}\label{def1.1}
A locally integrable function $K(x,y_1,\ldots,y_m)$, defined away from the diagonal $x = y_1 =\ldots =y_m$ in $(\mathbb{R} ^n)^{m+1}$, is called a $m$-linear Calder\'{o}n-Zygmund operator kernel of type $\omega$, with that $\omega:[0,\infty)\rightarrow[0,\infty)$ and satisfies $\int_0^1\frac{\omega(t)}{t}dt<+\infty$, if there exists a constant $A > 0$ such that
\begin{equation}\label{eq1.1}\,\
 \left | K(x,y_1,\dots,y_m) \right | \le \frac{A}{ {(\sum_{i=1}^{m} \mid x-y_i \mid )}^{mn} }
\end{equation}
for all $(x,y_1,\dots,y_m) \in (\mathbb{R} ^n)^{m+1}$ with $x \neq y_i$
for some $i \in \{1, 2, . . . , m\}$, and
\begin{equation}\label{eq1.2}\,\
 | K(x,y_1,\dots,y_m ) - K(x',y_1,\dots,y_m ) |
\le \frac{A}{ {(\sum_{i=1}^{m} | x-y_i| )}^{mn }}\omega \Big( \frac{| x-x'|}{\sum_{i=1}^{m} | x-y_i |}\Big )
\end{equation}
whenever $|x-x'| \leq \frac{1}{2}\max_{1\leq i\leq m} | x - y_i|$, and
\begin{equation}\label{eq1.3}\,\
\begin{split}
 &| K(x,y_1,\ldots,y_i,\ldots,y_m ) - K(x,y_1,\ldots,y_i',\ldots,y_m ) |
 \\&\qquad\le\frac{A}{ {(\sum_{i=1}^{m} | x-y_i | )}^{mn }  }
\omega \Big ( \frac{ | y_i-y_i' | }{\sum_{i=1}^{m} | x-y_i |  }  \Big )
\end{split}
\end{equation}
whenever $| y_i - y_i'|\leq \frac{1}{2} \max_{1\leq j\leq m} | x - y_j|$.

We say $T:\mathcal{S}(\mathbb{R} ^n) \times\ldots \times \mathcal{S}(\mathbb{R} ^n) \to \mathcal{S}'(\mathbb{R} ^n)$ is a $m$-linear operator with  kernel $K(x,y_1,\dots,y_m )$, if
\begin{equation}\label{eq1.4}\,\
T(\vec{f} )(x)=T(f_1,\dots,f_m)(x):=\int_{(\mathbb{R} ^n)^{m}}K(x,y_1,\dots,y_m)f_1(y_1)\dots f_m(y_m) dy_1\dots dy_m
\end{equation}
whenever $x \notin \bigcap_{i=1}^{m} \supp f_i$ and  $f_i\in C_c^\infty(\mathbb{R} ^n),i=1,\ldots,m$.

If $T$ can be extended to a bounded $m$-linear operator from $L^{p_1}(\mathbb{R}^n)\times\cdots \times L^{p_m}(\mathbb{R} ^n)$ to $L^{p,\infty}(\mathbb{R} ^n)$ for some $p_i,\,p\in(1,\infty)$  with $\sum_{i=1}^{m} 1/{p_i} =1/p$, or, from $L^{p_1}(\mathbb{R} ^n)\times\cdots \times L^{p_m}(\mathbb{R} ^n)$ to $L^{1}(\mathbb{R} ^n)$, for some $p_i\in(1,\infty),i=1,\ldots,m$ with $\sum_{i=1}^{m} 1/{p_i} =1$, then $T$ is called a $m$-linear Calder\'{o}n-Zygmund operator of type $\omega$, abbreviated to $m$-linear $\omega$-$\mathrm{CZO}$.
\end{definition}
For notational convenience, we will occasionally write
$$\vec{y}:=(y_1,\dots,y_m ),\,K(x,\vec{y}):=K(x,y_1,\dots,y_m ),\, d\vec{y}:=dy_1\ldots dy_m.$$

\begin{definition}
Given a collection of locally integrable functions $~\vec{b} =(b_1,\dots,b_m)$. If $T$ is the $m$-linear
Calder\'{o}n-Zygmund  operator, then the $m$-linear commutators of $T$ are defined by
\begin{equation}\label{eq1.5}\,\
[\vec{b},T](\vec{f})(x):=\sum_{j=1}^{m}T_{\vec{b}}^j(\vec{f})(x),
\end{equation}
where each term is the commutator of $T$ with $b_j$ in the $j$-th entry, that is,
$$T_{\vec{b}}^j(\vec{f})(x):=b_j(x)T(f_1,\dots,f_j,\dots,f_m)(x)-T(f_1,\dots,b_jf_j,\dots,f_m)(x).$$
\end{definition}
The $m$-linear commutators were first considered by P\'{e}rez  and Torres \cite{CPRH}. Later on, Lerner etal.\cite{LAK} introduced the multiple weights $ A_{\vec{p}}$  and proved that for $\vec{b}\in (\mathrm{BMO})^m$, $[\vec{b},T]$ is bounded from  $L^{p_1}(\omega_1)\times\ldots \times L^{p_m}(\omega_m)$
to $L^{p}(\vec{\omega})$ for $\vec{\omega}=(\omega_1,\ldots,\omega_m)\in A_{\vec{p}}$, the multiple Muckenhoupt  classes. A pilar for such considerations in bilinear setting is the work of Ding and Mei \cite{DM}, where they showed that the boundedness of bilinear Calder\'{o}n-Zygmund commutators on Morrey space.
 Xue and Yan \cite{XY} showed the boundedness of generalized commutators of multilinear Calder\'{o}n-Zygmund type
operators. Moreover, Chaffee \cite{LC} given the boundedness  of the bilinear singular integral operator commutator to characterize BMO.
Recently, Kunwar and Ou \cite{KO} and Li \cite{LK} obtained the Bloom type multiple weight inequalities of $[\vec{b},T]$. Guo and Wu \cite{GLW} obtained the unified theory for the necessity of bounded commutators, then continued by many authors (see \cite{LGRHT,MLCT,CSWH,CPRT,LZ,PT,WZ,LOR,DWZT,LYRY,IAF} etc.).

On the other hand,  for $~m=1$, Arai and Nakai \cite{ARNE1} recently studied the boundedness for commutators $[b,T]$ of Calder\'on-Zygmund operator $T$ on the generalized Morrey spaces. They showed that if $b$ belongs to generalized Campanato spaces $\mathcal{L}^{(1,\psi)}(\mathbb{R}^n)$, then $[b,T]$ is bounded on the generalized Morrey spaces. The corresponding result for the commutators of general fractional integrals is also obtained.

Based on the previous results mentioned above, we will consider the boundedness of $m$-linear commutators $[b,T]$ on the generalized Morrey spaces. In
addition. We will give necessary and  sufficient conditions for the boundedness of the commutator $[\vec{b},\,T]$ on generalized Morrey spaces with variable growth condition.
To state our main results, we first recall some relevant definitions and notation.

Let $B(x,r)$ be the open ball of radius $r$ centered at $x\in \mathbb{R}^n$, that is,
  $$
  B(x,r)=\{y\in \mathbb{R}^n:|y-x|<r\}.
  $$

For a measurable set $E\subset \mathbb{R}^n$, we denote by $\left | E \right | $ and $\chi _E$ the
Lebesgue measure of $E$ and the characteristic function of $E$, respectively. For a function $f\in L_{\mathrm{loc}}^{1} (\mathbb{R}^n)$ and a
ball $B$, let
$$f_B=\fint_{B}f(y)dy=\frac{1}{\left | B \right | }\int_{B}f(y)dy.
$$
Moreover, we denote by $\varphi(B)=\varphi(x,r)$, for a measurable function $\varphi:\mathbb{R}^n\times(0,\infty)\rightarrow(0,\infty)$, while a ball $B = B(x,r)$.

\begin{definition}[\cite{ARNE1}]
Let $\varphi(x,r)$ be a positive measurable function on
$\mathbb{R}^n\times (0,\infty )$ and $p\in[1,\infty)$, the generalized Morrey space $L^{(p,\varphi)}(\mathbb{R}^n)$ is denoted  by
$$
L^{(p,\varphi)}(\mathbb{R}^n):=\Big\{f: \|f\|_{L^{(p,\varphi)}(\mathbb{R}^n)}=\sup_{B}\Big( \frac{1}{\varphi(B)}\fint_{B} |f (y)|^p dy\Big)^{1/p}<\infty \Big \},$$
where the supremum is taken over all balls $B$ in $\mathbb{R}^n$.

We recall that $\|f\|_{L^{(p,\varphi)}(\mathbb{R}^n)}$ is a norm and $L^{(p,\varphi)}(\mathbb{R}^n)$ is a Banach space.
If $\varphi _{\lambda } (x,r)=r^{\lambda }$ for $\lambda\in[-n,0]$, then ${L^{(p,\varphi)}(\mathbb{R}^n)}$ is the classical Morrey space, that is,
$$
\|f\|_{L^{(p,\varphi_{\lambda})}(\mathbb{R}^n)}
=\sup _{B}\Big(\frac{1}{\varphi_{\lambda}(B) } \fint_{B}|f(y)|^{p} d y\Big)^{1 / p}
=\sup _{B=B(x, r)}\Big(\frac{1}{r^{\lambda}} \fint_{B}|f(y)|^{p} d y\Big)^{1 / p}.
$$
\end{definition}
In particular, ${L^{(p,\varphi_{-n})}(\mathbb{R}^n)}={L^{p}(\mathbb{R}^n)}$, and ${L^{(p,\varphi_{0})}(\mathbb{R}^n)}={L^{\infty}(\mathbb{R}^n)}$.

\begin{definition}[\cite{ARNE1}]
Let $\psi(x,r)$ be a positive measurable function on
$\mathbb{R}^n\times (0,\infty )$ and $p\in[1,\infty)$,
the generalized Campanato spaces $\mathcal{L}^{(p,\psi)}(\mathbb{R}^n)$ is defined  by
$$
\mathcal{L}^{(p, \psi)}(\mathbb{R}^{n}):=\Big \{f\in L_{\mathrm{loc}}^1(\mathbb{R}^n): \|f\|_{\mathcal{L}^{(p, \psi)}(\mathbb{R}^n)}<\infty \Big \},
$$
where $ \|f\|_{\mathcal{L}^{(p, \psi)}(\mathbb{R}^n)}=\sup _{B}\Big(\frac{1}{\psi(B) } \fint_{B}|f(y)-f_{B}|^{p} d y\Big)^{1/p}$, the supremum is taken over all balls $B$ in $\mathbb{R}^{n}$.
\end{definition}
It is easy to check that $\|f\|_{\mathcal{L}^{(p, \varphi)}(\mathbb{R}^{n})}$ is a norm modulo constants and $\mathcal{L}^{(p, \varphi)}(\mathbb{R}^{n})$ is a Banach space. If $p=1$ and $\varphi\equiv1$, then $\mathcal{L}^{(p, \varphi)}(\mathbb{R}^{n})=\mathrm{BMO}(\mathbb{R}^{n})$. If $p=1$ and
$\varphi(x,r)=r^\alpha\,(0<\alpha\leq1)$, then $\mathcal{L}^{(p, \varphi)}(\mathbb{R}^{n})$ coincides with ${\rm Lip}_{\alpha}(\mathbb{R}^{n})$.

For $f_i\in L^{(p_i,\varphi_i)}(\mathbb{R}^n),\,1<p_i<\infty$, for each ball $B\subset\mathbb{R}^n$, let $f_i^0:=f_i\chi_{2B},\,f_i^{\infty}:=f_i\chi_{(2B)^\complement},\,i=1,\ldots, m$. Here, and in what follows, $E^\complement=\mathbb{R}^n\backslash E$ denotes the complementary set of any measurable subset $E$ of $\mathbb{R}^n$. Then
\begin{equation}\label{eq1.6}\,\
\begin{split}
\prod_{i=1}^{m} f_{i} &=\prod_{i=1}^{m}\left(f_{i}^{0}+f_{i}^{\infty}\right) =\sum_{\alpha_{1}, \ldots, \alpha_{m} \in\{0, \infty\}} f_{1}^{\alpha_{1}} \ldots f_{m}^{\alpha_{m}}
\\&=: \vec{f^0}+ \vec{f^\infty }+\widetilde{\sum} f_{1}^{\alpha_{1}} \ldots f_{j}^{\alpha_{j}} \ldots f_{m}^{\alpha_{m}},
\end{split}
\end{equation}
where each term of $\widetilde{\sum}$ contains at least one $\alpha_j\neq 0$ or $\infty$  at the same time. We defined
\begin{equation}\label{eq1.7}\,\
T(\vec{f})(x)
:=T(\vec{f^{0}})(x)+T(\vec{f^{\infty}})(x)+\widetilde{\sum } T(f_1^{\alpha_1}\ldots f_m^{\alpha_m})(x),\,\forall x\in B.
\end{equation}

Note that $T(\vec{f^{0}}) $ is well defined since $f_i\chi_{2B}\in L^{p_i}(\mathbb{R}^n),\,i=1,\ldots m$, and it is easy to check that
$$T(\vec{f^{\infty}})(x),\,\widetilde{\sum }T\Big(\prod_{i=1}^{m}f_{i}^{\alpha_{i}}\Big)(x)\le \int_{2r}^{\infty }\frac{\varphi (x,t)^{1/p} }{t}dt\prod_{i=1}^{m}  \| f_i  \|_{L^{(p_i,\varphi _i)}},$$
which converges absolutely. Moreover, $T(\vec{f})(x)$ defined in (\ref{eq1.7}) is independent of the choice of the ball containing $x$. Furthermore, we can show that $T$ is bounded form $L^{(p_1,\varphi_1)}(\mathbb{R}^n)\times\ldots \times L^{(p_m,\varphi_m)}(\mathbb{R}^n)$
to $L^{(p,\varphi)}(\mathbb{R}^n)$. See Lemma \ref{lm3.1} for the details.

Let $f_i\in L^{(p_i,\varphi_i)}(\mathbb{R}^n),1<p_i<\infty$, $\,i=1,\ldots, m$. Employing the notation as in (1.6), we define $T_{\vec{b}}^j(\vec{f})$ on each ball $B$ by
\begin{equation}\label{eq1.8}\,\
T_{\vec{b}}^j(\vec{f})(x):=[b_j,T](\vec{f^0})(x)+[b_j,T](\vec{f^\infty})(x)+\widetilde{\sum } [b_j,T](f_1^{\alpha_1}\ldots f_m^{\alpha_m})(x),
\end{equation}
which is well-definedness, see Remark $\ref{rm3.3}$.

We say that a function $\theta:\,\mathbb{R}^n\times(0,\infty)\rightarrow(0,\infty)$ satisfies the doubling condition if there exists a positive constant $C$ such that, for all $x\in\mathbb{R}^n$ and $r,\,s\in(0,\infty)$,
\begin{equation}\label{eq1.9}
\frac{1}{C} \le \frac{\theta(x,r) }{\theta(x,s)} \le C,\,{\rm if}\,\frac{1}{2} \le \frac{r}{s} \le 2.
\end{equation}

We also consider the following condition that there exists a positive constant $C$ such that, for all $x,\,y\in\mathbb{R}^n$ and $r\in(0,\infty)$,
\begin{equation}\label{eq1.10}
\frac{1}{C} \le \frac{\theta(x,r) }{\theta(y,r)} \le C,\,{\rm if}\,|x-y|\leq r.
\end{equation}

For two functions $\theta,\,\kappa:\,\mathbb{R}^n\times(0,\infty)\rightarrow(0,\infty) $, we denote $\theta\thicksim\kappa$ if there exists a
positive constant $C$ such that, for all $x\in\mathbb{R}^n$ and $r\in(0,\infty)$,
\begin{equation}\label{eq1.11}
\frac{1}{C} \le \frac{\theta(x,r) }{\kappa(x,r)} \le C.
\end{equation}

\begin{definition}
$(i)$ Let $\mathcal{G}^{dec}$ be the set of all functions $\varphi: \mathbb{R}^n\times(0,\infty)\rightarrow(0,\infty)$ such
that $\varphi$ is almost decreasing and that $r\mapsto  \varphi (x,r)r^n$ is almost increasing. That is,
there exists a positive constant $C$ such that, for all $x\in\mathbb{R}^n$ and $r,\,s\in(0,\infty)$,
$$
C \varphi(x, r) \geq \varphi(x, s),\, \varphi(x, r) r^{n} \leq C \varphi(x, s) s^{n},\,\text {if } r<s.
$$

$(ii)$ Let $\mathcal{G}^{inc}$ be the set of all functions $\varphi:\mathbb{R}^n\times(0,\infty)\rightarrow(0,\infty)$ such that $\varphi$ is
almost increasing and that $r\mapsto \varphi(x, r)/r$ is almost decreasing. That is, there
exists a positive constant $C$ such that, for all $x\in\mathbb{R}^n$ and $r,\,s\in(0,\infty)$,
$$
\varphi(x, r) \leq C \varphi(x, s),\, C \varphi(x, r) / r \geq \varphi(x, s) / s,\,\text { if }\,r<s.
$$
\end{definition}
\begin{remark}\label{rm1.6}\,\
{\bf{(i)}}If $\varphi\in\mathcal{G}^{dec}$ or $\varphi\in\mathcal{G}^{inc}$, then $\varphi$ satisfies the doubling condition (\ref{eq1.9}).

{ \bf{(ii)}}It follows from \cite{ARNE1} that, for $\varphi\in \mathcal{G}^{dec}$, if $\varphi$ satisfies
\begin{equation}\label{eq1.12}
\lim_{r \to 0} \varphi (x,r)=\infty,\,\lim_{r \to \infty } \varphi (x,r)=0,
\end{equation}
then there exists $\widetilde{\varphi } \in \mathcal{G}^{dec}$ such that $\varphi\sim\widetilde{\varphi }$ and that $\widetilde{\varphi}(x,\cdot)$ is continuous, strictly decreasing and bijective from $(0,\infty)$ to itself for each $x$.
\end{remark}
Now we can formulate our main result as follows.
\begin{theorem}\label{the1.6}\,\
Let $T$ be a $m$-linear Calder\'{o}n-Zygmund  operator of type $\omega$ with satisfying $\int_{0}^{1} \frac{\omega (t)\log{\frac{1}{t} } }{t} dt<\infty $. Let $1 < p,\,p_i  < \infty,\,i=1,\ldots, m,\,p\leq q $ with $\sum_{i=1}^{m} 1/{p_i} =1/ p$, $\varphi,\,\varphi_i,\,\psi:\,\mathbb{R}^n\times (0,\infty )\rightarrow (0,\infty )$ and satisfy
\begin{equation}\label{eq1.13}
\prod_{i=1}^{m} \varphi _i^{1/{p_i}}=\varphi^{1/{p}}.
\end{equation}
$(i)$Assume that $\psi\in \mathcal{G}^{inc}$ satisfies $(\ref{eq1.10})$, $\varphi,\,\varphi_i\in \mathcal{G}^{dec}$ satisfies $(\ref{eq1.12})$. For all $x\in \mathbb{R}^{n}$ and $r\in (0,\infty)$, there exists a positive constant $C_0,\,C$, such that
\begin{equation}\label{eq1.14}\,\
\psi (x,r)\varphi (x,r)^{1/p }\le C_0\varphi (x,r)^{1/q },
\end{equation}
\begin{equation}\label{eq1.15}
 \int_r^\infty \frac{\varphi(x,t)}tdt\le C\varphi(x,r).
 \end{equation}
If $b_i\in {\mathcal{L}^{(1, \psi)}}(\mathbb{R}^{n})$, then $[\vec{b} ,T](\vec{f} ) $ in $(\ref{eq1.8})$ is well defined for all $f_i \in L^{(p_i,\varphi_i) }(\mathbb{R}^{n})$ and there exists a positive constant $C$, independent of $b_i$ and $f_i$, such that
$$\| [\vec{b} ,T] (\vec{f} ) \| _{L^{(q,\varphi) }} \le C\|\vec{b} \|_{(\mathcal{L}^{(1, \psi)})^m}\prod_{i=1}^{m} \| f_i\| _{L^{(p_i,\varphi_i) }}$$
where $\|\vec{b} \|_{(\mathcal{L}^{(1, \psi)})^m}:=\sup _{j=1,\dots ,m}\| b_j \|_{\mathcal{L}^{(1, \psi)}(\mathbb{R}^n)}$.

$(ii)$Conversely, assume that $\varphi,\,\varphi_i\in \mathcal{G}^{dec}$ satisfies $(\ref{eq1.10})$ and that there exists a positive constant $C_0$, such that, for all
$x\in \mathbb{R}^{n}$ and $r\in (0,\infty)$,
\begin{equation}\label{eq1.16}\,\
C_0\psi (x,r)\varphi (x,r)^{1/p }\geq \varphi (x,r)^{1/q }.
\end{equation}
If $T$ is a convolution type such that
$$
T(\vec{f} )(x)=\mathrm{p.v.}\int_{(\mathbb{R}^n)^m}K(x-y_1,\dots,x-y_m )\vec{f}d\vec{y}
$$
with nonzero homogeneous kernel $K\in C^\infty (S^{mn-1})$ satisfying $K(x)=| x|^{-n}K(x /| x|)$,\\$\,\int_{(S^{mn-1})}Kd\sigma(x')=0 $, and if $[\vec{b},T]$ is bounded from $L^{(p_1,\varphi_1)}(\mathbb{R}^n)\times\ldots \times L^{(p_m,\varphi_m)}(\mathbb{R}^n)$
to $L^{(q,\varphi)}(\mathbb{R}^n)$, then $b_j\in \mathcal{L}^{(1,\psi)}(\mathbb{R}^n),\,j=1,\ldots, m$ and there exists a positive constant $C$, independent of $b_j$, such that
$$\| b_j  \| _{\mathcal{L}^{(1,\psi)}}\le C \| [ b_j ,T] \|_{L^{(p_1,\varphi_1)}\times\ldots\times  L^{(p_m,\varphi_m)}\to L^{(q,\varphi)}}$$
where $~ \| [ b_j ,T] \|_{L^{(p_1,\varphi_1)}\times\ldots\times  L^{(p_m,\varphi_m)}\to L^{(q,\varphi)}}$ is the operator norm of $[b_j,T]$ form $L^{(p_1,\varphi_1)}(\mathbb{R}^n)$\\$\times\ldots \times L^{(p_m,\varphi_m)}(\mathbb{R}^n)$
to $L^{(q,\varphi)}(\mathbb{R}^n)$.
\end{theorem}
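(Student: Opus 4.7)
The plan is to fix a ball $B=B(x_0,r)$ and, for each $j\in\{1,\dots,m\}$, bound $\bigl(\varphi(B)^{-1}\fint_B|T_{\vec b}^j(\vec f)(x)|^q\,dx\bigr)^{1/q}$ by $\|b_j\|_{\mathcal{L}^{(1,\psi)}}\prod_i\|f_i\|_{L^{(p_i,\varphi_i)}}$; summing in $j$ yields the claim. I split $f_i=f_i^0+f_i^\infty$ with $f_i^0=f_i\chi_{2B}$ and use the decomposition (1.8). For the all-local summand I use the commutator identity
$$
[b_j,T](\vec{f^0})(x)=\bigl(b_j(x)-(b_j)_B\bigr)T(\vec{f^0})(x)-T\bigl(f_1^0,\dots,(b_j-(b_j)_B)f_j^0,\dots,f_m^0\bigr)(x),
$$
then apply a three-factor H\"older inequality on $B$, the Lebesgue-level multilinear boundedness of $T$ coming from Lemma 3.1, and a John--Nirenberg-type bound $\bigl(\fint_B|b_j-(b_j)_B|^s\bigr)^{1/s}\le C_s\psi(B)\|b_j\|_{\mathcal{L}^{(1,\psi)}}$ valid for any $s\in[1,\infty)$ under the hypothesis $\psi\in\mathcal{G}^{inc}$ together with (1.10). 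The resulting factor $\psi(B)\prod_i\varphi_i(B)^{1/p_i}=\psi(B)\varphi(B)^{1/p}$ is converted to $\varphi(B)^{1/q}$ using (1.14), which is precisely the form required by the $L^{(q,\varphi)}$ norm on this piece.

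\textbf{Global and cross terms.} For the all-global summand $[b_j,T](\vec{f^\infty})(x)$ with $x\in B$, I decompose the complement $(\mathbb{R}^n\setminus 2B)^m$ into dyadic shells $2^{k+1}B\setminus 2^k B$ in the $y_j$-variable and insert the telescoping identity
$$
b_j(x)-b_j(y_j)=\bigl(b_j(x)-(b_j)_B\bigr)+\bigl((b_j)_B-(b_j)_{2^{k+1}B}\bigr)+\bigl((b_j)_{2^{k+1}B}-b_j(y_j)\bigr),
$$
together with the doubling-chain bound $|(b_j)_{2^{k+1}B}-(b_j)_B|\lesssim(k+1)\psi(2^{k+1}B)\|b_j\|_{\mathcal{L}^{(1,\psi)}}$. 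Combined with the kernel size (1.1), H\"older in each shell, and the Morrey norms of the $f_i$'s, this produces a series whose convergence is secured by (1.15) and the doubling of $\psi$ and $\varphi_i$, again recovering the factor $\psi(B)\varphi(B)^{1/p}$. The cross terms $[b_j,T](f_1^{\alpha_1}\cdots f_m^{\alpha_m})$ are handled by the same scheme, except that the smoothness bounds (1.2)--(1.3) now provide the modulus $\omega$ on each shell, and the telescoping factor $k+1$ enters multiplicatively; the resulting sum $\sum_k(k+1)\omega(2^{-k})$ is equivalent to $\int_0^1\omega(t)\log(1/t)\,dt/t$, so the strengthened Dini condition is used precisely here. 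I expect this matching of the Campanato-chain logarithm against the kernel modulus $\omega$ to be the main technical obstacle in Part (i).

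\textbf{Part (ii), necessity.} For the converse I would adapt the Janson--Uchiyama argument to the multilinear setting, following the scheme of Guo--Wu and Chaffee. Since $K\in C^\infty(S^{mn-1})$ has mean zero but is not identically zero, pick $\vec z^*=(z_1^*,\dots,z_m^*)\in S^{mn-1}$ with $K(\vec z^*)\neq 0$, a phase $\theta$, and an open cone $\Gamma$ around $\vec z^*$ on which $\mathrm{Re}(e^{-i\theta}K)\ge c_0>0$; this yields a Fourier expansion $e^{-i\theta}/K(\vec z)=\sum_\nu a_\nu e^{2\pi i\nu\cdot\vec z}$ on a cube contained in $\Gamma$. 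Fix $j$ and an arbitrary ball $B=B(x_0,r)$, and construct companion balls $\widetilde B=B(x_0+\lambda r\,z_j^*,\delta r)$ and $B_i=B(x_0+\lambda r\,z_i^*,\delta r)$ for $i\neq j$, with $\lambda\gg\delta>0$ chosen so that for $x\in\widetilde B$, $y_j\in B$ and $y_i\in B_i$ the normalized vector $(x-y_1,\dots,x-y_m)/|(x-\vec y)|$ remains inside $\Gamma$. Taking $f_i=\chi_{B_i}$ for $i\neq j$ and $f_j(y_j)=\overline{\mathrm{sgn}(b_j(y_j)-(b_j)_B)}\chi_B(y_j)$, rewriting $[b_j,T]=[b_j-(b_j)_{\widetilde B},T]$, using Fubini on the second term and approximating the averaged kernel $\int_{\widetilde B}K(x-\vec y)\,dx$ by $|\widetilde B|\,K(\vec z^*)/r^{mn}$, I obtain (after absorbing the lower-order $\psi(\widetilde B)\|b_j\|$-contribution)
$$
\Big|\fint_{\widetilde B}[b_j,T](\vec f)(x)\,dx\Big|\gtrsim \fint_B\bigl|b_j(y)-(b_j)_B\bigr|\,dy.
$$
Feeding this into the assumed Morrey-boundedness, computing the Morrey norms of $\chi_{B_i}$ and of $f_j$ in terms of $\varphi_i(B)^{1/p_i}$ via (1.10) and the doubling of $\varphi_i$, and using (1.16) to convert the surplus $\varphi(B)^{1/q-1/p}$ into $\psi(B)$, I conclude $\psi(B)^{-1}\fint_B|b_j-(b_j)_B|\lesssim \|[b_j,T]\|_{\mathrm{op}}$, and taking the supremum over $B$ finishes the proof. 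The main obstacle here is the geometric bookkeeping in $\mathbb{R}^{mn}$: the translation and radius parameters $\lambda,\delta$ must be tuned so that the product of supports sits inside a single cone of nonvanishing of $K$, while only the oscillation of $b_j$ on $B$ (not $\widetilde B$ or $B_i$) is extracted on the right-hand side.
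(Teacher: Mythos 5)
Your Part~(i) strategy has a genuine gap when $p<q$. You fix a ball $B$ and try to bound $\varphi(B)^{-1/q}\bigl(\fint_B|T_{\vec b}^j(\vec f)|^q\bigr)^{1/q}$ directly via H\"older on $B$, the Lebesgue boundedness $T:\prod L^{p_i}\to L^{p}$, and a John--Nirenberg bound for $b_j$. But $f_i^0=f_i\chi_{2B}$ and $(b_j-(b_j)_B)f_j^0$ are only locally in $L^{p_i}$ (or a slightly smaller exponent), so the multilinear boundedness delivers control of $T(\vec{f^0})$ and of $T(\dots,(b_j-(b_j)_B)f_j^0,\dots)$ only at the $L^{p}$-level (in fact below $p$). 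Since $p\le q$, an $L^{p-\epsilon}$-average over $B$ does \emph{not} control the $L^q$-average over $B$ --- the inequality runs the wrong way --- and the multiplicative conversion $\psi(B)\varphi(B)^{1/p}\le C_0\varphi(B)^{1/q}$ from \eqref{eq1.14} cannot repair a Lebesgue-exponent mismatch inside the integral. What your sketch actually proves is an $L^1$-Morrey bound of the form $\fint_B|T_{\vec b}^j(\vec f)|\lesssim\varphi(B)^{1/q}\|b_j\|\prod\|f_i\|$, which is strictly weaker than the claimed $L^{(q,\varphi)}$ bound. The paper takes the Fefferman--Stein route precisely to bridge this: Proposition~\ref{pro7.2} gives the pointwise estimate $M^\#([\vec b,T]\vec f)(x)\lesssim\|\vec b\|\bigl\{M_{\psi^\eta}(|T\vec f|^\eta)(x)^{1/\eta}+\mathcal M_{\psi^\eta}(|\vec f|^\eta)(x)^{1/\eta}\bigr\}$, and the $p\to q$ improvement is then realized inside the boundedness of the generalized maximal operators $M_{\psi^\eta}$, $\mathcal M_{\psi^\eta}$ from $L^{(p/\eta,\varphi)}$ to $L^{(q/\eta,\varphi)}$ (Lemmas~\ref{lem2.5}, \ref{lem2.7}); the proof of those lemmas selects a ball radius adapted to the value of $\mathcal M(\vec f)(x)$, a mechanism unavailable in a fixed-ball computation. (The price the paper pays is having to verify the vanishing condition $\lim_{r\to\infty}\fint_{B_r}[b_j,T]\vec f=0$ required by Lemma~\ref{lm5.1}, which occupies Cases~1--3 of the proof.) For $p=q$ your direct scheme could be salvaged by choosing slightly perturbed exponents, but the theorem allows $p<q$, and there your approach fails.

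In Part~(ii) you are in the right family (Janson-type Fourier expansion of $1/K$ near a non-vanishing point, as the paper does), but the step ``absorbing the lower-order $\psi(\widetilde B)\|b_j\|$-contribution'' is circular: $\|b_j\|_{\mathcal L^{(1,\psi)}}<\infty$ is exactly what is to be proved, so you cannot subtract a term proportional to it from both sides. The root cause is your decomposition $[b_j,T]=(b_j-c)T-T((b_j-c)\cdot)$, which produces a leftover term $(b_j-c)T(\vec f)$ needing to be absorbed. The paper avoids this by never decomposing: it starts from $\int_{B^m}\bigl|\sum_j(b_j(x_j)-(b_j)_{B'})\bigr|\,d\vec x$, symmetrizes using $\sum_j\sum_{i\ne j}(b_j(y_j)-b_i(y_i))=0$, and inserts $1=K\cdot(1/K)$ with the Fourier series so that the integrand becomes exactly $[\vec b,T]$ applied to explicit test functions with no remainder term to absorb; the individual $b_j$'s are then recovered via the equivalences of Lemma~\ref{lm3.5}. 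Your per-$j$ version can be made rigorous (e.g.\ by pairing $[b_j,T](\vec f)$ against an oscillatory test function $g$ so the Fourier insertion happens before any decomposition of $[b_j,T]$), but as written it is incomplete.
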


 We organize the rest of the paper as follows.
In Section \ref{sec2}, we will recall and establish some auxiliary lemmas. In Section \ref{sec3}, we establish some lemmas and  give the proofs of the boundedness of the generalized $m$-linear maximal operator. Section \ref{sec4}, we will establish the pointwise estimate for the sharp maximal operator of $[\vec{b},\,T]$. The proof of Theorem \ref{the1.6} will be given in Section \ref{sec5}.

Finally, we make some conventions for notations. Throughout this paper, we always use $C$ to denote a positive constant that is independent of the main parameters involved but whose value may differ from line to line. Constants with subscripts, such as $ C_p$, are dependent on the correspending subscripts. We denote $f\lesssim g$ if $f\leq Cg$, and $f\thicksim g$ if $f\lesssim g \lesssim f$. For $1\leq p\leq \infty,$ $p'$ denote the conjugate index of $p$ with $1/p+1/p'=1$.

\section{Auxiliary lemmas\label{sec2}}
In this section, we will recall some previous results and establish some auxiliary lemmas.

\begin{lemma}[\cite{ARNE1}]\label{lem2.1}\,\
Let $p\in(1,\infty)$ and $\psi \in \mathcal{G}^{inc}$. Assume that $\psi$ satisfies $(\ref{eq1.10})$. Then, $\mathcal{L}^{(p,\psi^p)}(\mathbb{R}^n)=\mathcal{L}^{(1,\psi)}(\mathbb{R}^n)$
with equivalent norms.
\end{lemma}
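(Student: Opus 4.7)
The plan is to prove the two inclusions separately. The easy direction $\mathcal{L}^{(p,\psi^p)}(\mathbb{R}^n)\subset \mathcal{L}^{(1,\psi)}(\mathbb{R}^n)$ with the bound $\|f\|_{\mathcal{L}^{(1,\psi)}}\le \|f\|_{\mathcal{L}^{(p,\psi^p)}}$ is immediate: for any ball $B$, Hölder's inequality yields
\begin{equation*}
\frac{1}{\psi(B)}\fint_B |f(y)-f_B|\,dy \le \frac{1}{\psi(B)}\Bigl(\fint_B |f(y)-f_B|^p\,dy\Bigr)^{1/p}=\Bigl(\frac{1}{\psi(B)^p}\fint_B |f(y)-f_B|^p\,dy\Bigr)^{1/p},
\end{equation*}
and taking the supremum over balls gives the inclusion.

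For the nontrivial direction $\mathcal{L}^{(1,\psi)}(\mathbb{R}^n)\subset \mathcal{L}^{(p,\psi^p)}(\mathbb{R}^n)$, I would run a John--Nirenberg type argument adapted to the generalized Campanato setting. Fix a ball $B_0=B(x_0,r_0)$ and, by homogeneity, assume $\|f\|_{\mathcal{L}^{(1,\psi)}}=1$. Apply a Calder\'on--Zygmund stopping-time decomposition to $|f-f_{B_0}|$ on $B_0$ at height $\lambda=A\psi(B_0)$ for a sufficiently large absolute constant $A$, producing pairwise disjoint maximal subballs $\{B_j^{(1)}\}$ with $A\psi(B_0)<\fint_{B_j^{(1)}}|f-f_{B_0}|\le 2^n A\psi(B_0)$ and $|f-f_{B_0}|\le A\psi(B_0)$ a.e.\ off $\bigcup_j B_j^{(1)}$. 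Since $\psi\in\mathcal{G}^{inc}$ is almost increasing in $r$ and $B_j^{(1)}\subset B_0$, combined with the center-regularity hypothesis (\ref{eq1.10}) one obtains $\psi(B_j^{(1)})\le C\psi(B_0)$; and by the stopping rule $|f_{B_j^{(1)}}-f_{B_0}|\le 2^n A\psi(B_0)$, so on $B_j^{(1)}$ one has $\fint_{B_j^{(1)}}|f-f_{B_j^{(1)}}|\le\psi(B_j^{(1)})\le C\psi(B_0)$. Iterating the decomposition on each $B_j^{(1)}$ produces a nested family with $\sum_j |B_j^{(k)}|\le 2^{-k}|B_0|$, yielding the exponential decay
\begin{equation*}
\bigl|\{y\in B_0:|f(y)-f_{B_0}|>t\psi(B_0)\}\bigr|\le C_1 e^{-C_2 t}|B_0|,\qquad t>0.
\end{equation*}

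From the exponential distributional inequality, the $L^p$ bound follows by the layer-cake formula,
\begin{equation*}
\fint_{B_0} |f(y)-f_{B_0}|^p\,dy = \frac{p}{|B_0|}\int_0^\infty t^{p-1}\bigl|\{|f-f_{B_0}|>t\}\bigr|\,dt \le C_p \psi(B_0)^p,
\end{equation*}
so $\|f\|_{\mathcal{L}^{(p,\psi^p)}}\le C_p\|f\|_{\mathcal{L}^{(1,\psi)}}$, completing the reverse inclusion. The step I expect to demand the most care is the iteration of the Calder\'on--Zygmund decomposition: one must verify that at each stage the generation constants are truly geometric, which hinges on the almost-increasing property of $\psi(x,r)$ in $r$ to guarantee $\psi(B')\le C\psi(B_0)$ for every descendant ball $B'\subset B_0$, and on (\ref{eq1.10}) to pass between balls of comparable radius with shifted centers; together these replace the scale-invariance of the classical BMO John--Nirenberg proof.
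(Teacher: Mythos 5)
The paper does not prove this lemma; it is quoted from \cite{ARNE1}, so there is no in-text argument to compare against. Your proof is the standard and correct route: H\"older for the easy inclusion, and a John--Nirenberg iteration for the reverse one, with the two hypotheses ($\psi$ almost increasing and condition (\ref{eq1.10})) used exactly where they must be, namely to get $\psi(B')\le C\psi(B_0)$ uniformly over all subballs $B'\subset B_0$ so that the generation-to-generation constants stay geometric. A cleaner way to package the same idea, which avoids redoing the iteration: the inequality $\sup_{B\subset B_0}\fint_B|f-f_B|\,dy\le C\,\psi(B_0)\|f\|_{\mathcal{L}^{(1,\psi)}}$ (which is precisely your observation about descendant balls) says that $f$ has local BMO norm on $B_0$ at most $C\psi(B_0)\|f\|_{\mathcal{L}^{(1,\psi)}}$, and the classical local John--Nirenberg inequality then gives $\bigl(\fint_{B_0}|f-f_{B_0}|^p\,dy\bigr)^{1/p}\le C_p\psi(B_0)\|f\|_{\mathcal{L}^{(1,\psi)}}$ directly. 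One technical point to tighten in your write-up: the Calder\'on--Zygmund stopping-time selection of \emph{maximal disjoint subballs} does not literally work for balls (maximality and disjointness conflict); one should run the decomposition on dyadic subcubes of a cube comparable to $B_0$ and transfer back using the doubling of $\psi$ (Remark \ref{rm1.6}(i)) together with (\ref{eq1.10}). With that adjustment the argument is complete.
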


\begin{lemma}[\cite{ARNE1}]\label{lem2.2}\,\
Let $p\in(1,\infty)$ and $\psi\in \mathcal{G}^{inc}$. Assume that $\psi$ satisfies $(\ref{eq1.10})$. Then,
there exists a positive constant $C$ dependent only on $n,\,p$ and $\psi$ such that, for all
$ f\in \mathcal{L}^{(1,\psi)}(\mathbb{R}^n)$ and for all $x\in\mathbb{R}^n$ and $r,\,s\in(0,\infty)$,
\begin{equation}\label{eq2.2}
\left(\fint _{B(x, s)}|f(y)-f_{B(x, r)}|^{p} d y\right)^{1 / p} \leq C \int_{r}^{s} \frac{\psi(x, t)}{t} d t\|f\|_{\mathcal{L}^{(1, \psi)}},\,
{\rm if}\,2 r<s,
\end{equation}
and
\begin{equation}\label{eq2.3}
\left (\fint_{B(x,s)} | f(y)-f_{B(x,r)}| ^pdy \right) ^{1/p}\le C\left(\log_{2}{\frac{s}{r}}\right)\psi (x,s) \|f\| _{\mathcal{L}^{(1,\psi) } },\,
{\rm if}\, 2r<s.
\end{equation}
\end{lemma}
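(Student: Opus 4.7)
The plan is to prove Lemma 2.2 by a standard dyadic telescoping between the averages $f_{B(x,2^{j}r)}$, combined with the $L^p$-version of the Campanato seminorm supplied by Lemma 2.1. Set $N$ to be the unique nonnegative integer with $2^{N}r\le s<2^{N+1}r$; because $2r<s$ we have $N\ge 1$ and $N\sim \log_{2}(s/r)$. I will split the quantity to be estimated as
\begin{equation*}
\Bigl(\fint_{B(x,s)}|f(y)-f_{B(x,r)}|^{p}dy\Bigr)^{1/p}
\le \Bigl(\fint_{B(x,s)}|f(y)-f_{B(x,s)}|^{p}dy\Bigr)^{1/p}+|f_{B(x,s)}-f_{B(x,r)}|,
\end{equation*}
and then telescope the second piece through the chain of concentric balls $B(x,r),B(x,2r),\dots,B(x,2^{N}r),B(x,s)$.

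For the first summand, Lemma 2.1 gives $\mathcal{L}^{(1,\psi)}(\mathbb{R}^{n})=\mathcal{L}^{(p,\psi^{p})}(\mathbb{R}^{n})$ with equivalent norms, so that term is $\lesssim \psi(x,s)\|f\|_{\mathcal{L}^{(1,\psi)}}$. For each consecutive difference in the telescoping I use the standard containment estimate: if $B\subset B'$ with $|B'|\le 2^{n}|B|$, then
\begin{equation*}
|f_{B'}-f_{B}|\le \fint_{B}|f(y)-f_{B'}|dy\le 2^{n}\fint_{B'}|f(y)-f_{B'}|dy\le 2^{n}\psi(x,\mathrm{rad}\,B')\|f\|_{\mathcal{L}^{(1,\psi)}},
\end{equation*}
applied first with $B=B(x,2^{N}r)$, $B'=B(x,s)$ (since $s<2^{N+1}r\le 2\cdot 2^{N}r$), and then with $B=B(x,2^{j}r)$, $B'=B(x,2^{j+1}r)$ for $j=0,\dots,N-1$. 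After summation this yields
\begin{equation*}
|f_{B(x,s)}-f_{B(x,r)}|\lesssim \Bigl(\psi(x,s)+\sum_{j=0}^{N-1}\psi(x,2^{j+1}r)\Bigr)\|f\|_{\mathcal{L}^{(1,\psi)}}.
\end{equation*}

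The main (but mild) technical point is converting the dyadic sum back into the integral $\int_{r}^{s}\psi(x,t)/t\,dt$. Since $\psi\in\mathcal{G}^{inc}$ it satisfies the doubling condition (Remark 1.6(i)), so $\psi(x,t)\sim \psi(x,2^{j+1}r)$ uniformly for $t\in(2^{j}r,2^{j+1}r)$; hence
\begin{equation*}
\psi(x,2^{j+1}r)\sim \int_{2^{j}r}^{2^{j+1}r}\frac{\psi(x,t)}{t}dt,
\end{equation*}
and summing telescopes to $\int_{r}^{2^{N}r}\psi(x,t)/t\,dt\le \int_{r}^{s}\psi(x,t)/t\,dt$. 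The remainder term $\psi(x,s)$ is absorbed into the same integral by the same doubling comparison on $(s/2,s)\subset(r,s)$. This establishes \eqref{eq2.2}.

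The second inequality \eqref{eq2.3} is a trivial consequence of \eqref{eq2.2}: because $\psi\in\mathcal{G}^{inc}$ is almost increasing, $\psi(x,t)\le C\psi(x,s)$ for all $t\le s$, so
\begin{equation*}
\int_{r}^{s}\frac{\psi(x,t)}{t}dt\le C\psi(x,s)\int_{r}^{s}\frac{dt}{t}=C(\log 2)\,\psi(x,s)\log_{2}\frac{s}{r},
\end{equation*}
and inserting this into \eqref{eq2.2} finishes the proof. I do not anticipate any serious obstacle; the only bookkeeping care is to make sure the implied constants depend only on $n$, $p$, and the doubling and monotonicity constants of $\psi$, which is automatic from Lemma 2.1 and Remark 1.6(i).
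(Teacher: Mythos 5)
Your proof is correct. The paper gives no proof of this lemma (it is quoted from Arai--Nakai \cite{ARNE1}), and your argument --- triangle inequality into the oscillation over $B(x,s)$ plus a dyadic telescoping of the averages, with Lemma \ref{lem2.1} handling the $L^p$ oscillation and the doubling of $\psi$ converting the dyadic sum into $\int_r^s \psi(x,t)t^{-1}\,dt$ --- is exactly the standard route taken in that reference; the deduction of \eqref{eq2.3} from \eqref{eq2.2} via the almost-increasing property of $\psi$ is also correct.
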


\begin{lemma}[\cite{ARNE1}]\label{lem2.3}
Let $\varphi$ satisfy the doubling condition $(\ref{eq1.9})$ and $(\ref{eq1.15})$, that is,
$$ \int_{r}^{\infty } \frac{\varphi (x,t)}{t} dt\le C\varphi (x,r).$$
Then, for all $p\in (0,\infty)$, there exists a positive constant $C_p$ such that, for all $x\in \mathbb{R}^n$ and $r>0$,
\begin{equation*}
\int_{r}^{\infty } \frac{\varphi (x,t)^{1/p}}{t} dt\le C_p\varphi (x,r)^{1/p}.
\end{equation*}
\end{lemma}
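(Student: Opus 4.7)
The plan is to reduce the continuous integral estimate to a discrete statement about a geometric sequence, exploiting both the doubling condition and the hypothesis at every scale. Setting $a_k := \varphi(x, 2^k r)$ for $k \geq 0$, the doubling condition $(\ref{eq1.9})$ implies $\varphi(x,t) \sim a_k$ for $t \in [2^k r, 2^{k+1} r]$, so each dyadic piece of the integral in the hypothesis is comparable to $a_k$. Applying the hypothesis at the shifted scale $2^{k_0} r$ therefore yields
$$S_{k_0} := \sum_{k \geq k_0} a_k \;\sim\; \int_{2^{k_0} r}^\infty \frac{\varphi(x,t)}{t}\, dt \;\leq\; C\, \varphi(x, 2^{k_0} r) = C\, a_{k_0},$$
so that $S_{k_0} \leq C' a_{k_0}$ uniformly in $k_0$, where $C'$ absorbs the doubling constant.

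The key move is to upgrade this summability estimate to geometric decay. Since $S_{k+1} = S_k - a_k$ and $a_k \geq S_k/C'$, we obtain
$$S_{k+1} \leq \bigl(1 - 1/C'\bigr) S_k =: \rho\, S_k, \qquad \rho \in (0,1),$$
and iterating yields $a_k \leq S_k \leq \rho^k S_0 \lesssim \rho^k \varphi(x,r)$. Applying the same dyadic decomposition to the target integral and invoking doubling once more,
$$\int_r^\infty \frac{\varphi(x,t)^{1/p}}{t}\, dt \;\lesssim\; \sum_{k=0}^\infty a_k^{1/p} \;\lesssim\; \varphi(x,r)^{1/p} \sum_{k=0}^\infty \rho^{k/p} \;\leq\; C_p\, \varphi(x,r)^{1/p},$$
since $\rho^{1/p} < 1$ guarantees convergence of the geometric series, with $C_p$ controlled by $(1 - \rho^{1/p})^{-1}$ up to a multiple of $C'$.

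The main obstacle is the bootstrap from the single summability bound $S_0 \lesssim a_0$ to the scale-uniform bound $S_{k_0} \lesssim a_{k_0}$ driving the geometric decay; fortunately the hypothesis is stated for every $r \in (0,\infty)$, which allows us to reapply it at each base point $2^{k_0} r$ and extract the uniform constant $C'$. Once geometric decay is established, the passage from exponent $1$ to exponent $1/p$ for arbitrary $p \in (0,\infty)$ is a routine summation of a geometric series, with no further use of the structure of $\varphi$.
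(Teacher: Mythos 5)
Your argument is correct. The paper itself gives no proof of Lemma \ref{lem2.3} --- it is quoted from Arai--Nakai \cite{ARNE1} --- so there is no in-text argument to compare against, but your proof is a complete and valid derivation of the standard self-improving property underlying this lemma. The chain of steps checks out: doubling makes each dyadic block of the integral comparable to $a_k=\varphi(x,2^kr)$ (with the harmless factor $\log 2$ absorbed into $C'$); applying hypothesis \eqref{eq1.15} at every base scale $2^{k_0}r$ gives the uniform tail bound $S_{k_0}\le C'a_{k_0}$; the recursion $S_{k+1}=S_k-a_k\le(1-1/C')S_k$ then yields genuine geometric decay $a_k\lesssim\rho^k\varphi(x,r)$ (note $C'>1$ automatically since $S_{k_0}\ge a_{k_0}>0$, so $\rho\in(0,1)$); and the target integral for exponent $1/p$ reduces to the convergent series $\sum_k\rho^{k/p}$. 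This is the discrete counterpart of the usual continuous argument, in which one sets $F(r)=\int_r^\infty\varphi(x,t)t^{-1}\,dt$, observes $F'(r)=-\varphi(x,r)/r\le -F(r)/(Cr)$, and integrates this differential inequality to get $\varphi(x,s)\lesssim(r/s)^{\varepsilon}\varphi(x,r)$; either route delivers the same polynomial decay across scales, after which the case of a general power $1/p$ is immediate.
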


\begin{lemma}[\cite{ARNE1}]\label{lem2.4}\,\
Let $p,\,p_i\in[1,\infty),\,i=1,\ldots,m$ satisfies $\sum_{i=1}^{m} 1/{p_i} =1/p$ and $\varphi,\,\varphi _i:\mathbb{R}^n\times (0,\infty )\rightarrow (0,\infty )$. If $\varphi,\,\varphi_i$ satisfies $(\ref{eq1.13})$, then
$$
\|\prod_{i=1}^{m}f_i  \| _{L^{(p,\varphi )}}\le \prod_{i=1}^{m} \left \| f_i \right \| _{L^{(p_i,\varphi _i)}}.
$$
\end{lemma}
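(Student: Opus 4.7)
The statement is a Hölder-type inequality for generalized Morrey spaces, and the plan is to reduce it to the classical Hölder inequality applied on each ball, then exploit the compatibility condition (\ref{eq1.13}) on the growth functions.

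First I would fix an arbitrary ball $B$ and look at the quantity $\fint_B \prod_{i=1}^m |f_i|^p\,dy$. Setting $r_i = p_i/p$, the relation $\sum_{i=1}^m 1/p_i = 1/p$ rewrites as $\sum_{i=1}^m 1/r_i = 1$, so the classical generalized Hölder inequality applied to the functions $|f_i|^p$ gives
\begin{equation*}
\int_B \prod_{i=1}^m |f_i(y)|^p\,dy \le \prod_{i=1}^m \left(\int_B |f_i(y)|^{p_i}\,dy\right)^{p/p_i}.
\end{equation*}
Dividing by $|B|$ and using $|B|^{-1} = \prod_{i=1}^m |B|^{-p/p_i}$ (again because $\sum p/p_i = 1$), I would absorb the Lebesgue normalization into each factor to obtain
\begin{equation*}
\fint_B \prod_{i=1}^m |f_i(y)|^p\,dy \le \prod_{i=1}^m \left(\fint_B |f_i(y)|^{p_i}\,dy\right)^{p/p_i}.
\end{equation*}

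Next I would invoke the hypothesis (\ref{eq1.13}), raised to the power $p$, which gives $\varphi(B) = \prod_{i=1}^m \varphi_i(B)^{p/p_i}$. Dividing the previous display by $\varphi(B)$ and redistributing the weights factor by factor yields
\begin{equation*}
\frac{1}{\varphi(B)}\fint_B \prod_{i=1}^m |f_i(y)|^p\,dy \le \prod_{i=1}^m \left(\frac{1}{\varphi_i(B)}\fint_B |f_i(y)|^{p_i}\,dy\right)^{p/p_i}.
\end{equation*}
Raising both sides to the $1/p$ power and recognizing each factor as bounded by $\|f_i\|_{L^{(p_i,\varphi_i)}}$ gives the desired estimate on the ball $B$; taking the supremum over all balls finishes the argument.

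There is essentially no obstacle here: the lemma is a clean bookkeeping exercise once one identifies the correct Hölder exponents $p_i/p$ and notices that (\ref{eq1.13}) is precisely what is needed to distribute $\varphi(B)^{-1}$ among the $m$ factors. I would write the proof in just a few displayed lines, with one sentence each for (i) applying Hölder to the integrals, (ii) factoring the normalizing constant $|B|^{-1}$, (iii) invoking (\ref{eq1.13}), and (iv) taking the supremum over $B$.
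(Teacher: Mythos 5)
Your proof is correct, and since the paper states Lemma \ref{lem2.4} as a quoted result from \cite{ARNE1} without reproducing a proof, there is nothing to diverge from: your argument (generalized H\"older on each ball with exponents $p_i/p$, factoring $|B|^{-1}$ and $\varphi(B)^{-1}$ via $\sum_i p/p_i=1$ and the $p$-th power of (\ref{eq1.13}), then taking the supremum) is exactly the standard one. All steps check out, including the observation that $p_i/p\ge 1$ so H\"older applies.
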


For a function $\rho:\mathbb{R}^n\times (0,\infty )\rightarrow (0,\infty )$, the generalized maximal
fractional operator, which is defined by
$$
M _\rho (f)(x)=\sup_{B\ni x}\rho (B) \fint_{B} \left | f(y) \right |dy .
$$

For the generalized maximal fractional operator $M_\rho$, we have the following lemma.
\begin{lemma}[\cite{ARNE1}]\label{lem2.5}\,\
Let $1<p\leq q<\infty$ and $\rho,\,\varphi$ are positive measurable function on $\mathbb{R}^n\times (0,\infty )$. Assume that $~\varphi$ is in $\mathcal{G}^{dec}$ and satisfies $(\ref{eq1.12})$. Assume also that there exists a positive constant $C_0$ such that,
 for all $x\in \mathbb{R}^n$ and $r\in(0,\infty)$,
\begin{equation}\label{eq2.4}
\rho (x,r)\varphi (x,r)^{1/p}\le C_0\varphi (x,r)^{1/q}.
\end{equation}
Then $M_\rho$ is bounded from $L^{(p,\varphi)}(\mathbb{R}^n)$  to $L^{(q,\varphi)}(\mathbb{R}^n)$. Clearly, if $\rho\equiv1$, then $M _\rho$ is the Hardy-Littlewood maximal operator $M$, we have $ \| M(f) \| _{L^{(p,\varphi)}}\lesssim \| f  \| _{L^{(p,\varphi)}}$.
\end{lemma}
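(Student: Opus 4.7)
The plan is to prove a Hedberg-type pointwise inequality for $M_\rho$ and then combine it with the $L^{(p,\varphi)}$-boundedness of the Hardy--Littlewood maximal operator $M$. The target pointwise inequality is
$$M_\rho f(x) \lesssim \bigl(Mf(x)\bigr)^{p/q}\,\|f\|_{L^{(p,\varphi)}}^{\,1-p/q}, \qquad x\in\mathbb{R}^n.$$

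To establish this, fix $x$ and a parameter $\delta>0$, and split the supremum defining $M_\rho f(x)$ into two ranges according to whether $r_B\le\delta$ or $r_B>\delta$. On the short range, hypothesis (\ref{eq2.4}) gives $\rho(x_B,r_B)\le C_0\,\varphi(x_B,r_B)^{1/q-1/p}$; since $1/q-1/p\le 0$ and $\varphi\in\mathcal{G}^{dec}$, the map $r\mapsto\varphi(x_B,r)^{1/q-1/p}$ is almost increasing, so replacing $r_B$ by $\delta$ (using the doubling condition (\ref{eq1.9}) together with the inclusions $B(x_B,r_B)\subset B(x,2r_B)$ and $B(x,r_B)\subset B(x_B,2r_B)$) controls it by a multiple of $\varphi(x,\delta)^{1/q-1/p}$; combining with $\fint_B|f|\le Mf(x)$ yields
$$\sup_{B\ni x,\,r_B\le\delta}\rho(B)\fint_B|f|\,dy \;\lesssim\; \varphi(x,\delta)^{1/q-1/p}\,Mf(x).$$
On the long range, Hölder and the Morrey definition give $\fint_B|f|\le\varphi(x_B,r_B)^{1/p}\|f\|_{L^{(p,\varphi)}}$, and combining again with (\ref{eq2.4}) produces $\rho(B)\fint_B|f|\lesssim\varphi(x_B,r_B)^{1/q}\|f\|_{L^{(p,\varphi)}}$. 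Since $\varphi(x_B,\cdot)^{1/q}$ is almost decreasing on the range $r_B>\delta$, this supremum is bounded by a multiple of $\varphi(x,\delta)^{1/q}\|f\|_{L^{(p,\varphi)}}$.

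Now invoke Remark \ref{rm1.6}(ii) to replace $\varphi$ by an equivalent $\widetilde{\varphi}\in\mathcal{G}^{dec}$ that is continuous, strictly decreasing, and bijective from $(0,\infty)$ onto itself. Choose $\delta=\delta(x)$ so that $\widetilde{\varphi}(x,\delta)^{1/p}=Mf(x)/\|f\|_{L^{(p,\varphi)}}$ (the degenerate cases $Mf(x)=0$ and $\|f\|_{L^{(p,\varphi)}}=0$ are trivial). With this balanced choice the two contributions coincide, and the Hedberg inequality above follows.

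Finally, for any ball $B_0=B(x_0,r_0)$, averaging the Hedberg inequality to the $q$-th power and applying Hölder gives
$$\Bigl(\fint_{B_0}M_\rho f(x)^q\,dx\Bigr)^{1/q} \;\lesssim\; \|f\|_{L^{(p,\varphi)}}^{\,1-p/q}\Bigl(\fint_{B_0}Mf(x)^p\,dx\Bigr)^{1/q}.$$
To close the argument, I use the classical $L^{(p,\varphi)}$-boundedness of $M$ (the $\rho\equiv 1$, $q=p$ case of the lemma, proved by splitting $f=f\chi_{2B_0}+f\chi_{(2B_0)^c}$, applying the ordinary $L^p$-boundedness of $M$ on the first piece, and an annular decomposition plus the decay built into $\mathcal{G}^{dec}$ and (\ref{eq1.12}) on the second). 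This yields $(\fint_{B_0}(Mf)^p)^{1/p}\lesssim\varphi(x_0,r_0)^{1/p}\|f\|_{L^{(p,\varphi)}}$, which when substituted above produces the desired bound $\lesssim\varphi(x_0,r_0)^{1/q}\|f\|_{L^{(p,\varphi)}}$. Taking supremum over $B_0$ completes the proof. The main obstacle lies in the Hedberg step: since condition (\ref{eq1.10}) is \emph{not} assumed, transferring $\varphi(x_B,r_B)$ and $\rho(x_B,r_B)$ to quantities centered at $x$ must be handled carefully through the inclusions $B(x_B,r_B)\subset B(x,2r_B)$ and the doubling condition (\ref{eq1.9}), in combination with the $\mathcal{G}^{dec}$ monotonicity.
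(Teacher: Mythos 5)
Your overall strategy --- a Hedberg-type pointwise bound $M_\rho f(x)\lesssim (Mf(x))^{p/q}\|f\|_{L^{(p,\varphi)}}^{1-p/q}$ followed by the $L^{(p,\varphi)}$-boundedness of $M$ --- is exactly the route the paper takes: Lemma \ref{lem2.5} itself is only quoted from \cite{ARNE1}, but the paper's proof of its multilinear analogue, Lemma \ref{lem2.7}, is precisely this argument (inequality (\ref{eq2.9}) there is your Hedberg inequality, and Lemma \ref{lem2.6} plays the role of your final averaging step). So the skeleton is right.

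There is, however, a genuine gap in the step you yourself single out as the main obstacle. On both ranges you end up with quantities $\varphi(x_B,\delta)^{1/q-1/p}$ and $\varphi(x_B,\delta)^{1/q}$ centered at the (varying) centers $x_B$, and you claim these can be replaced by $\varphi(x,\delta)$ "using the doubling condition (\ref{eq1.9}) together with the inclusions $B(x_B,r_B)\subset B(x,2r_B)$." This does not work: (\ref{eq1.9}) compares $\varphi(z,r)$ with $\varphi(z,s)$ for the \emph{same} center $z$ and comparable radii, and since $\varphi$ is just an abstract positive function of $(x,r)$ (not a measure of the ball), inclusions between balls impose no relation between $\varphi(x_B,\cdot)$ and $\varphi(x,\cdot)$. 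The comparison of centers is exactly condition (\ref{eq1.10}), which is \emph{not} assumed here. The repair is to abandon the radius threshold $\delta$ and dichotomize instead on the \emph{value} of $\varphi(B)$: for an arbitrary ball $B\ni x$ (normalizing $\|f\|_{L^{(p,\varphi)}}=1$), if $\varphi(B)\ge Mf(x)^p$ then (\ref{eq2.4}) gives $\rho(B)\fint_B|f|\le C_0\varphi(B)^{1/q-1/p}Mf(x)\le C_0 Mf(x)^{p/q}$, while if $\varphi(B)<Mf(x)^p$ then H\"older plus the Morrey norm give $\rho(B)\fint_B|f|\le C_0\varphi(B)^{1/q}<C_0 Mf(x)^{p/q}$. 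This is what the paper's proof of Lemma \ref{lem2.7} effectively does (there the dichotomy $r\lessgtr u$ is converted into a value dichotomy via monotonicity of $\varphi(x,\cdot)$ for centered balls), it needs no comparison of centers, and it even renders the regularization of Remark \ref{rm1.6}(ii) inessential for this step. Your concluding step (raising to the $q$-th power, averaging over $B_0$, and invoking the $p=q$, $\rho\equiv1$ case for $M$) is fine once the pointwise inequality is secured.
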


For a function $\rho:\mathbb{R}^n\times (0,\infty )\rightarrow (0,\infty )$, the generalized $m$-linear maximal operator, which is defined by
$$
 \mathcal{M} _\rho (\vec{f})(x)=\sup_{B\ni x}\rho (B)\prod_{i=1}^{m}  \fint_{B} | f_i(y_i) | dy_i.
$$

If $\rho (B)=|B| ^{\alpha /n}$, then $\mathcal{M} _\rho (\vec{f})$ is the usual fraction maximal operator $\mathcal{M} _\alpha (\vec{f})$ defined by
$$
\mathcal{M} _\alpha  (\vec{f})(x)=\sup_{B\ni x}  | B  | ^{\alpha /n}\prod_{i=1}^{m}  \fint_{B} | f_i(y_i) | dy_i.
$$

If $\rho\equiv1$, then $\mathcal{M} _\rho (\vec{f})(x)$ is the $m$-linear  maximal operator $\mathcal{M}$, that is
$$
\mathcal{M} (\vec{f})(x)=\sup_{B\ni x}  \prod_{i=1}^{m}  \fint_{B} | f_i(y_i) | dy_i.
$$

For the boundedness of $\mathcal{M},\,\mathcal{M} _\rho$ are the consequences of the following  lemmas.
\begin{lemma}\label{lem2.6}\,\
 Let $p,\,p_i\in[1 , \infty)$ satisfies $\sum_{i=1}^{m} 1/{p_i} =1/p$ and $\varphi,\,\varphi_i:\mathbb{R}^n\times (0,\infty )\rightarrow (0,\infty )$ satisfies $(\ref{eq1.13})$.
Assume that there exists a positive constant $C$ such that
$$
C\varphi (x,r)\geq\varphi (x,s),\,for \,x\in \mathbb{R}^n,~0<r<s,
$$
then $\mathcal{M}$ is bounded from $L^{(p_1,\varphi_1)}(\mathbb{R}^n)\times\ldots \times L^{(p_m,\varphi_m)}(\mathbb{R}^n)$  to $L^{(p,\varphi)}(\mathbb{R}^n)$.
\end{lemma}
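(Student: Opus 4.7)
The plan is to reduce the multilinear maximal estimate to the scalar Hardy--Littlewood case via a pointwise bound combined with the generalized H\"older inequality in Morrey spaces (Lemma~\ref{lem2.4}), and then to verify the scalar boundedness of $M$ on each $L^{(p_i,\varphi_i)}(\mathbb{R}^{n})$ directly. For the reduction, the elementary estimate $\fint_{B'}|f_i|\le M(f_i)(y)$ valid for every $B'\ni y$ yields
\[
\mathcal{M}(\vec f)(y)=\sup_{B'\ni y}\prod_{i=1}^{m}\fint_{B'}|f_i|\le \prod_{i=1}^{m}M(f_i)(y),
\]
and then Lemma~\ref{lem2.4} together with (\ref{eq1.13}) gives
\[
\|\mathcal{M}(\vec f)\|_{L^{(p,\varphi)}}\le \Big\|\prod_{i=1}^{m}M(f_i)\Big\|_{L^{(p,\varphi)}}\le \prod_{i=1}^{m}\|M(f_i)\|_{L^{(p_i,\varphi_i)}}.
\]

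It therefore suffices to prove $\|Mf\|_{L^{(p_i,\varphi_i)}}\lesssim\|f\|_{L^{(p_i,\varphi_i)}}$ for each $i$. Fix a ball $B=B(x_{0},r_{0})$ and split $f=f\chi_{2B}+f\chi_{(2B)^{c}}$. The near part is handled by the classical strong $(p_i,p_i)$ bound for $M$, the doubling of Lebesgue measure, and the almost-decreasing hypothesis:
\[
\fint_{B}M(f\chi_{2B})^{p_{i}}\lesssim \fint_{2B}|f|^{p_{i}}\lesssim \varphi_{i}(2B)\|f\|_{L^{(p_i,\varphi_i)}}^{p_{i}}\lesssim \varphi_{i}(B)\|f\|_{L^{(p_i,\varphi_i)}}^{p_{i}}.
\]
For the tail, the key geometric observation is that any ball $B'\ni y\in B$ that meets $(2B)^{c}$ must satisfy $r_{B'}\ge r_{0}/2$; if $k\ge 0$ is chosen so that $2^{k-1}r_{0}\le r_{B'}<2^{k}r_{0}$, then $B'\subset 2^{k+2}B$ with $|2^{k+2}B|/|B'|\lesssim 1$, so Jensen's inequality together with the definition of $L^{(p_i,\varphi_i)}$ yield, uniformly in $B'$,
\[
\fint_{B'}|f|\lesssim\Big(\fint_{2^{k+2}B}|f|^{p_{i}}\Big)^{1/p_{i}}\lesssim \varphi_{i}(2^{k+2}B)^{1/p_{i}}\|f\|_{L^{(p_i,\varphi_i)}}\lesssim \varphi_{i}(B)^{1/p_{i}}\|f\|_{L^{(p_i,\varphi_i)}},
\]
where the last inequality again uses the almost-decreasing property. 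Taking the supremum over such $B'$ produces the pointwise bound $M(f\chi_{(2B)^{c}})(y)\lesssim \varphi_{i}(B)^{1/p_{i}}\|f\|_{L^{(p_i,\varphi_i)}}$ on $B$, whence $\fint_{B}M(f\chi_{(2B)^{c}})^{p_{i}}\lesssim \varphi_{i}(B)\|f\|_{L^{(p_i,\varphi_i)}}^{p_{i}}$; combining with the local piece, dividing by $\varphi_{i}(B)$, and taking the supremum over $B$ completes the scalar step.

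The principal technical point is the tail estimate: the restriction $r_{B'}\ge r_{0}/2$ is precisely what permits collapsing the entire dyadic chain of values $\varphi_{i}(2^{k+2}B)$ to a single $\varphi_{i}(B)$ via the almost-decreasing hypothesis, so no stronger structural assumption on $\varphi_{i}$ (such as (\ref{eq1.12}) or the full $\mathcal{G}^{dec}$ condition of Lemma~\ref{lem2.5}) needs to be imposed.
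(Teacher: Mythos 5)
Your reduction --- the pointwise inequality $\mathcal{M}(\vec f)\le\prod_{i=1}^{m} M(f_i)$ followed by the generalized H\"older inequality of Lemma~\ref{lem2.4} under~(\ref{eq1.13}) --- is exactly the route taken in the paper, which at that point simply invokes Lemma~\ref{lem2.5} (with $\rho\equiv 1$ and $q=p_i$) to bound each $\|Mf_i\|_{L^{(p_i,\varphi_i)}}$. The genuine difference is that you prove the scalar step from scratch via a near/far decomposition, and your closing remark is apt: the collapse $\varphi_i(2^{k+2}B)\lesssim\varphi_i(B)$, made possible by the observation $r_{B'}\ge r_0/2$, uses only the almost-decreasing property, so your route avoids the full $\mathcal{G}^{dec}$ and (\ref{eq1.12}) hypotheses that Lemma~\ref{lem2.5} nominally carries. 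Two small points are worth flagging. First, the strong $(p_i,p_i)$ bound for $M$ in your near-part estimate requires $p_i>1$; the stated $p_i\in[1,\infty)$ together with $\sum_i 1/p_i=1/p\le 1$ forces $p_i>1$ whenever $m\ge 2$, but the case $m=1$, $p=1$ is excluded in substance if not in the written hypotheses. Second, both your near estimate ($\varphi_i(2B)\lesssim\varphi_i(B)$) and your far estimate, and equally the paper's appeal to Lemma~\ref{lem2.5} applied to $L^{(p_i,\varphi_i)}$, actually require the almost-decreasing property of each $\varphi_i$, not merely of $\varphi$ as the statement of Lemma~\ref{lem2.6} literally says; this is an imprecision in the statement shared by both arguments, not a defect specific to yours.
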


\begin{proof}
Note that $\mathcal{M}(\vec{f})(x)\leq\prod_{i=1}^{m} M(f_i)(x)$, using Lemma $\ref{lem2.4}$ and Lemma $\ref{lem2.5}$, we have
$$
\|\mathcal{M}(\vec{f})\|_{L^{(p,\varphi)}}
 \leq\| \prod_{i=1}^{m} M(f_i)(x)\|_{L^{(p,\varphi)}}
\leq\prod_{i=1}^{m}\| M(f_i)\|_{L^{(p_i,\varphi_i)}}
\lesssim \prod_{i=1}^{m}\| f_i\|_{L^{(p_i,\varphi_i)}}.
$$
\end{proof}

\begin{lemma}\label{lem2.7}\,\
Let $p,\,p_i,\,q\in[1,\infty),\,p<q,\,i=1,\ldots, m$ satisfies $\sum_{i=1}^{m} 1/{p_i} =1/p$. Let $\rho,\,\varphi,\,\varphi_i:\mathbb{R}^n\times (0,\infty )\rightarrow (0,\infty )$. Assume that $\varphi,\,\varphi_i$ is in $\mathcal{G}^{dec}$ and satisfies $(\ref{eq1.9})$, $(\ref{eq1.12})$, $(\ref{eq1.13})$. Assume also that there exists a positive constant $C_0$ such that,
 for all $x\in \mathbb{R}^n$ and $r\in(0,\infty)$,
\begin{equation}\label{eq2.8}
\rho (x,r)\varphi (x,r)^{1/p}\le C_0\varphi (x,r)^{1/q}.
\end{equation}
Then $\mathcal{M}_\rho$ is bounded on $(L^{(p_1,\varphi_1)}(\mathbb{R}^n)\times\ldots \times L^{(p_m,\varphi_m)}(\mathbb{R}^n), L^{(q,\varphi)}(\mathbb{R}^n))$.
\end{lemma}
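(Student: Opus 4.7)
The plan is to linearize the problem by factoring the weight $\rho$ into one-variable weights $\rho_i$, reducing Lemma \ref{lem2.7} to the single-variable Lemma \ref{lem2.5} combined with the multilinear Hölder-type inequality on Morrey spaces (Lemma \ref{lem2.4}).

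First I would introduce auxiliary exponents $q_i$ defined by $1/q_i = (p/q)(1/p_i)$. Summing gives $\sum_{i=1}^m 1/q_i = (p/q)\sum 1/p_i = 1/q$, and since $p<q$ we automatically obtain $p_i < q_i$, so each pair $(p_i,q_i)$ is admissible for Lemma \ref{lem2.5}. Raising the identity $\prod \varphi_i^{1/p_i} = \varphi^{1/p}$ to the power $p/q$ yields $\prod \varphi_i^{1/q_i} = \varphi^{1/q}$, so the tuple $(q_i,\varphi_i)$ satisfies the hypothesis of Lemma \ref{lem2.4}. Next I would set
$$
\rho_i(x,r) := \varphi_i(x,r)^{1/q_i - 1/p_i},
$$
so that trivially $\rho_i(x,r)\varphi_i(x,r)^{1/p_i} = \varphi_i(x,r)^{1/q_i}$, meeting the hypothesis (\ref{eq2.4}) of Lemma \ref{lem2.5} for the linear operator $M_{\rho_i}$.

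The key pointwise step is then the factorization
$$
\prod_{i=1}^m \rho_i(x,r) \;=\; \prod_{i=1}^m \varphi_i(x,r)^{1/q_i - 1/p_i} \;=\; \frac{\varphi(x,r)^{1/q}}{\varphi(x,r)^{1/p}} \;=\; \varphi(x,r)^{1/q - 1/p},
$$
which, combined with the hypothesis (\ref{eq2.8}), gives $\rho(x,r) \le C_0 \prod_{i=1}^m \rho_i(x,r)$. Taking the supremum over balls $B\ni x$, this immediately yields
$$
\mathcal{M}_\rho(\vec f)(x) \le C_0 \prod_{i=1}^m M_{\rho_i}(f_i)(x).
$$

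To finish, I would apply Lemma \ref{lem2.5} to each linear maximal $M_{\rho_i}$ (using that $\varphi_i\in\mathcal{G}^{dec}$ satisfies (\ref{eq1.12}) by hypothesis) to obtain $\|M_{\rho_i}(f_i)\|_{L^{(q_i,\varphi_i)}}\lesssim \|f_i\|_{L^{(p_i,\varphi_i)}}$, and then invoke Lemma \ref{lem2.4} with the tuple $(q_i,\varphi_i,q,\varphi)$ (whose compatibility I verified above) to conclude
$$
\|\mathcal{M}_\rho(\vec f)\|_{L^{(q,\varphi)}}
\;\lesssim\; \Bigl\|\prod_{i=1}^m M_{\rho_i}(f_i)\Bigr\|_{L^{(q,\varphi)}}
\;\le\; \prod_{i=1}^m \|M_{\rho_i}(f_i)\|_{L^{(q_i,\varphi_i)}}
\;\lesssim\; \prod_{i=1}^m \|f_i\|_{L^{(p_i,\varphi_i)}}.
$$
The only subtlety, and the main point I would double-check, is the algebraic coherence of the choice of $q_i$: it has to simultaneously produce the right Hölder scaling $\sum 1/q_i = 1/q$, the right factorization of the weight $\prod \varphi_i^{1/q_i} = \varphi^{1/q}$, and make $\rho \le C_0 \prod \rho_i$ follow from (\ref{eq2.8}); the choice $1/q_i = (p/q)(1/p_i)$ achieves all three simultaneously. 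Beyond that, the proof is routine reduction to the known single-variable and Hölder lemmas.
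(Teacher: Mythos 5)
Your proof is correct, and it takes a genuinely different route from the paper's. The paper normalizes $\|f_i\|_{L^{(p_i,\varphi_i)}}=1$ and proves a Hedberg-type pointwise estimate $\mathcal{M}_\rho(\vec f)(x)\lesssim\mathcal{M}(\vec f)(x)^{p/q}$ by choosing $u>0$ with $\varphi(x,u)=\mathcal{M}(\vec f)(x)^p$ (which is why Remark \ref{rm1.6}(ii) and the bijectivity of $\varphi(x,\cdot)$ are invoked) and then splitting the radii of balls into $r\le u$ and $r>u$, finishing by an appeal to Lemma \ref{lem2.6}. You instead factor the weight, $\rho\le C_0\prod_i\rho_i$ with $\rho_i=\varphi_i^{1/q_i-1/p_i}$ and $1/q_i=(p/q)(1/p_i)$, which linearizes the operator pointwise, $\mathcal{M}_\rho(\vec f)\le C_0\prod_i M_{\rho_i}(f_i)$, and then reduces everything to the one-variable Lemma \ref{lem2.5} plus the H\"older-type Lemma \ref{lem2.4}. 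Your bookkeeping is right: $\sum 1/q_i=1/q$, $\prod\varphi_i^{1/q_i}=\varphi^{1/q}$, $p_i<q_i$, and $\rho_i\varphi_i^{1/p_i}=\varphi_i^{1/q_i}$ all check out, and the factorization $\prod\rho_i=\varphi^{1/q-1/p}$ is precisely what \eqref{eq2.8} needs. The structural gain of your approach is that it exhibits the multilinear result as a literal tensor product of linear results, bypassing the renormalization and case analysis entirely at this level; what it does not gain is weaker hypotheses, since Lemma \ref{lem2.5} uses the same continuity/bijectivity mechanism internally. One small caveat applies equally to your proof and the paper's: Lemma \ref{lem2.5} (and hence Lemma \ref{lem2.6}) really needs $p_i>1$, even though Lemma \ref{lem2.7} is stated for $p_i\in[1,\infty)$; this matches the main theorem's hypotheses, so it is a cosmetic mismatch in the lemma statement rather than a defect in your argument.
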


\begin{proof}
We assume that $\varphi(x,\cdot)$ is continuous, strictly decreasing and bijective from $(0,\infty)$ to itself for each $x\in\mathbb{R} ^n,$ see Remark $\ref{rm1.6}$(ii).
We  consider $f_i\in{L^{({p_i},\varphi_i )}(\mathbb{R} ^n)}$ and
 with $\left \| f_i \right \|_{L^{({p_i},\varphi_i )}(\mathbb{R} ^n)}=1,\,i=1,\ldots, m$. Since Lemma $\ref{lem2.6}$, to obtain Lemma $\ref{lem2.7}$, it suffices to prove for $1<p<q$,
\begin{equation}\label{eq2.9}
\mathcal{M}_\rho (\vec{f} ) (x)\le C\mathcal{M}(\vec{f})(x)^{p/q},\,x\in \mathbb{R}^n,
\end{equation}
for some positive constant $C$ independent of $f_i$ and $x$. To prove (\ref{eq2.9}), we show that for any ball $B = B(x,r)$, we have
$$
\rho (B)\prod_{i=1}^{m}\fint_{B}\left | f_i (y_i)\right | dy_i\le C_0\mathcal{M}(\vec{f})(x)^{p/q}.
$$
Choose $u > 0$ such that $\varphi(x,u) = \mathcal{M}(\vec{f}) (x)^p$. If $r \leq u$, then $\varphi(B) = \varphi(x,r)\geq \mathcal{M} (\vec{f}) (x)^p$
and $\varphi(B)^{1/q-1/p} \leq \mathcal{M} (\vec{f}) (x)^{p/q-1}$. By (\ref{eq2.8}), we have
$$
\rho (B)\prod_{i=1}^{m}\fint_{B} | f_i (y_i) | dy_i\le C_0\varphi (B)^{1/q-1/p}\prod_{i=1}^{m}\fint_{B} | f_i (y_i)| dy_i\le C_0\mathcal{M}(\vec{f})(x)^{p/q}.
$$
If $r >u$, then $\varphi(B)= \varphi(x,r) < \mathcal{M} (\vec{f})(x)^p$ and $\varphi(B)^{1/q} < \mathcal{M} (\vec{f}) (x)^{p/q}$. By H\"{o}lder's inequality and (\ref{eq1.13}), (\ref{eq2.8}),  we have
\begin{equation*}
\begin{split}
\begin{aligned}
\rho (B)\prod_{i=1}^{m}\fint_{B}\left | f_i\right | dy_i
&\le \rho (B)\prod_{i=1}^{m}\Big ( \fint_{B}\left | f_i \right | ^{p_i}dy_i \Big ) ^{1/{p_i}}
\le \rho (B)\varphi (B)^{1/p}\prod_{i=1}^{m}\left \| f_i \right \|_{L^{({p_i},\varphi_i )}}
\\&\le C_0\varphi (B)^{1/q}\le C_0\mathcal{M}(\vec{f})(x)^{p/q}.
\end{aligned}
\end{split}
\end{equation*}
Then we have (\ref{eq2.9}) and complete the proof.
\end{proof}

\section{Main lemmas\label{sec3}}
In this section we give several lemmas to prove main  results.
\begin{lemma}\label{lm3.1}\,\
Under the assumption in Theorem $\ref{the1.6}~(i)$. For all $f_i\in L^{(p_i,\varphi_i)}(\mathbb{R}^n),\,i=1,\ldots m$ and all balls $ B=B(z,r),\,x\in B$, we have
\begin{equation}\label{eq3.1}
\int_{(\mathbb{R} ^n)^m}\big| K(x,\vec{y} )\vec{f^{\infty}}\big| d\vec{y} \lesssim \int_{2r}^\infty\frac{\varphi (z,t)^{1/p}}{t}dt\prod_{i=1}^{m}  \| f_i  \|_{L^{(p_i,\varphi _i)}},
\end{equation}
\begin{equation}\label{eq3.2}
\int_{(\mathbb{R} ^n)^m}\big| K(x,\vec{y} )\widetilde{\sum }\prod_{i=1}^{m} f _i^{\alpha_i}\big| d\vec{y}
 \lesssim\int_{2r}^{\infty }\frac{\varphi (z,t)^{1/p} }{t}dt\prod_{i=1}^{m}  \| f_i  \|_{L^{(p_i,\varphi _i)}}.
\end{equation}
Moreover, for all $x\in \mathbb{R}^n$, then $T(\vec{f} )(x)$ in $(\ref{eq1.4})$ is well defined. Moreover $T(\vec{f})(x)$ in $(\ref{eq1.4})$  is independent of
the choice of the ball $B$ containing $x$ and we have
\begin{equation}\label{eq3.3}
\|T(\vec{f} ) \| _{L^{(p,\varphi)}}
\lesssim \prod_{i=1}^{m} \| f_i\| _{L^{(p_i,\varphi _i)}}.
\end{equation}
\begin{proof}
For $(\ref{eq3.1})$. If $x\in B(z,r)$ and $y_i \notin 2B$, then $|z-y_i| /2\le  | x-y_i|\le (3/2)| z-y_i| $. From (\ref{eq1.1}) it follows that $| K(x,y_1,\dots ,y_m) | \lesssim (\sum_{i=1}^{m}| x-y_i |)^{-mn}\sim (\sum_{i=1}^{m}| z-y_i |)^{-mn}$. Then
\begin{align*}
\int_{(\mathbb{R} ^n)^m}| K(x,y_1,\dots,y_m )\vec{f^{\infty}} | d\vec{y}
&\lesssim \int_{(\mathbb{R} ^n\setminus 2B)^m}\frac{\prod_{i=1}^{m}| f_i| }{(\sum_{i=1}^{m}| z-y_i |)^{mn}} d\vec{y}
\\&= \sum_{k=0}^{\infty }  \int_{(2^{k+2}B)^m\setminus (2^{k+1}B)^m}\frac{\prod_{i=1}^{m}|f_i| }{(\sum_{i=1}^{m}| z-y_i|)^{mn}}d\vec{y} .
\end{align*}

Since $ (y_1, \ldots , y_m) \in (2^{k+2}B)^m \backslash(2^{k+1}B)^m$, there exists $i_0,\, 1 \leq i_0 \leq m$ such that $y_{i_0}\notin 2^{k+1}B$, which
yields $|z-y_{i_0}|> 2^{k+1}r$, so that $\sum_{i=1}^{m} | z-y_i|> 2^{k+1}r$. By H\"{o}lder's inequality and (\ref{eq1.9}), (\ref{eq1.13}), (\ref{eq1.15}),  we obtain
\begin{align*}
\sum_{k=0}^{\infty } \int_{(2^{k+2}B)^m\backslash(2^{k+1}B)^m } \frac{\prod_{i=1}^{m} | f_i | }{(\sum_{i=1}^{m}  | z-y_i  |)^{mn}  } d\vec{y}
&\leq\sum_{k=0}^{\infty } \int_{(2^{k+2}B)^m } \frac{\prod_{i=1}^{m} | f_i | }{(2^{k+1}r)^{mn}} d\vec{y}
\\&\lesssim \sum_{k=0}^{\infty }\varphi(z,2^{k+2}r)^ {1/p}\prod_{i=1}^{m}  \| f_i  \|_{L^{(p_i,\varphi _i)}}
\\&\lesssim \sum_{k=0}^{\infty }\int_{2^{k+1}r}^{2^{k+2}r}\frac{\varphi (z,t)^{1/p}}{t}dt
\prod_{i=1}^{m}  \| f_i  \|_{L^{(p_i,\varphi _i)}}
\\&\lesssim \int_{2r}^{\infty} \frac{\varphi(z,t)^ {1/p}}{t}dt\prod_{i=1}^{m}  \| f_i  \|_{L^{(p_i,\varphi _i)}}.
\end{align*}
Therefore, we have $(\ref{eq3.1})$. Similarly, we have $(\ref{eq3.2})$.

For $(\ref{eq3.3})$, taking $ B^*=2B$. By $(\ref{eq1.6})$, we get
\begin{align*}
T(\vec{f})(x)=T(\vec{f^0}  )(x)+T(\vec{f^\infty}  )(x)+\widetilde{\sum} T\left(f_{1}^{\alpha_{1}}, \ldots, f_{m}^{\alpha_{m}}\right)(x).
\end{align*}

For $T(\vec{f^0}  )(x)$, by the boundedness of $T$ on $L^p(\mathbb{R}^n)$ and (\ref{eq1.9}), (\ref{eq1.13}), we have
\begin{align*}
 \| T(\vec{f^0}  ) \|_{L^{(p,\varphi )}} &=\sup_B \Big\{ \frac{1}{\varphi (B) } \fint_{B}  | T(\vec{f^0}  )(x) | ^p dx    \Big\}^{1/p}
\\&\lesssim\sup_B \Big\{ \frac{1}{\varphi (B)| B|  } \prod_{i=1}^{m} \|  f_i\chi _{B^*} \|_{L^{p_i}}^p\Big\}^{1/p}
\\&\le \prod_{i=1}^{m}\sup_B \Big\{ \frac{1}{\varphi_i (B)   }  \fint\limits_{B} | f_i\chi _{B^*}  |^{p_i}   \Big \}^{1/{p_i} }
\\&\lesssim\prod_{i=1}^{m} \|f_i\| _{L^{{(p_i,\varphi _i)}}}.
\end{align*}

For $T(\vec{f^\infty})(x)$  and $\widetilde{\sum} T\left(f_{1}^{\alpha_{1}}, \ldots, f_{m}^{\alpha_{m}}\right)(x)$, by $(\ref{eq3.1})$, $~(\ref{eq3.2})$, we obtain
$$ | T(\vec{f^\infty})(x)|,\,\,\widetilde{\sum} \big|T(f_{1}^{\alpha_{1}}, \ldots, f_{m}^{\alpha_{m}})(x)\big|
\lesssim\int_{2r}^{\infty} \frac{\varphi (x,t)^{1/p}}{t} dt\prod_{i=1}^{m} \| f_i  \| _{L^{(p_i,\varphi _i)}},$$
then, by Lemma $\ref{lem2.3}$ and (\ref{eq1.9}), we get
\begin{align*}
\|T(\vec{f^\infty }  )   \| _{L^{(p,\varphi)}}
&\le\sup_B \Big\{ \frac{1}{\varphi (B)} \fint_{B}  \Big| \int_{2r}^{\infty} \frac{\varphi (x,t)^{1/p}}{t} dt \Big|^p dx \Big\}^{1/ p }\prod_{i=1}^{m} \| f_i  \| _{L^{(p_i,\varphi _i)}}
\\&\lesssim \prod_{i=1}^{m} \| f_i \| _{L^{(p_i,\varphi _i)}}.
\end{align*}
Similarly, we obtain
$$\|T(f_{1}^{\alpha_{1}}, \ldots, f_{m}^{\alpha_{m}})\| _{L^{(p,\varphi)}}\lesssim \prod_{i=1}^{m} \| f_i \| _{L^{(p_i,\varphi _i)}}.$$
It follows that
$$\| \sum T( f_{1}^{\alpha_{1}}, \ldots f_{m}^{\alpha_{m}}) \|_{L^{(p,\varphi )}}\lesssim \prod_{i=1}^{m}\| f_i \| _{L^{(p_i,\varphi _i)}}.$$
Therefore, we have $(\ref{eq3.3})$.
\end{proof}
\end{lemma}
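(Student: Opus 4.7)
The plan is to establish the three estimates in order: first the pointwise bounds (3.1) and (3.2), then the $L^{(p,\varphi)}$ bound (3.3). The whole argument rests on the size estimate (1.1) together with Hölder's inequality and a dyadic decomposition of $(\mathbb{R}^n\setminus 2B)^m$.

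To prove (3.1), I first observe that if $x\in B(z,r)$ and $y_i\notin 2B$, then $|z-y_i|/2\le|x-y_i|\le(3/2)|z-y_i|$, so by (1.1) we get $|K(x,\vec y)|\lesssim(\sum_i|z-y_i|)^{-mn}$. I then split $(\mathbb{R}^n\setminus 2B)^m=\bigcup_{k\ge 0}\bigl((2^{k+2}B)^m\setminus(2^{k+1}B)^m\bigr)$. On the $k$-th shell there exists some index $i_0$ with $|z-y_{i_0}|>2^{k+1}r$, so the denominator is at least $(2^{k+1}r)^{mn}$. Bounding $(2^{k+2}B)^m\setminus(2^{k+1}B)^m$ by the larger cube $(2^{k+2}B)^m$ and applying Hölder's inequality with exponents $p_1,\dots,p_m$ gives
\[
\prod_{i=1}^m\fint_{2^{k+2}B}|f_i|\,dy_i\le\prod_{i=1}^m\Bigl(\fint_{2^{k+2}B}|f_i|^{p_i}\Bigr)^{1/p_i}\le\prod_{i=1}^m\varphi_i(z,2^{k+2}r)^{1/p_i}\|f_i\|_{L^{(p_i,\varphi_i)}}.
\]
Using (1.13) the product of the $\varphi_i^{1/p_i}$ collapses to $\varphi(z,2^{k+2}r)^{1/p}$, and the doubling property (1.9) allows me to replace $\varphi(z,2^{k+2}r)^{1/p}$ by the integral $\int_{2^{k+1}r}^{2^{k+2}r}\varphi(z,t)^{1/p}/t\,dt$. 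Summing over $k$ collapses the telescoping integrals into $\int_{2r}^\infty\varphi(z,t)^{1/p}/t\,dt$, which is (3.1). The estimate (3.2) for the mixed terms $\widetilde{\sum}\prod_i f_i^{\alpha_i}$ is proved by the exact same dyadic argument, since the part where $\alpha_i=0$ can only shrink the domain of integration, so each summand is dominated by the quantity on the right.

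For (3.3), I fix a ball $B=B(z,r)$, take $B^*=2B$, and split $T(\vec f)=T(\vec{f^0})+T(\vec{f^\infty})+\widetilde{\sum}T(f_1^{\alpha_1},\dots,f_m^{\alpha_m})$ as in (1.7). For the local piece $T(\vec{f^0})$, I invoke the $L^{p_1}\times\cdots\times L^{p_m}\to L^p$ boundedness of $T$ (guaranteed since $T$ is a multilinear $\omega$-CZO with $\omega\in\mathrm{Dini}(1)$), then apply Hölder's inequality and (1.13) to the Morrey averages to obtain
\[
\|T(\vec{f^0})\|_{L^{(p,\varphi)}}\lesssim\sup_B\prod_{i=1}^m\Bigl(\frac1{\varphi_i(B)}\fint_B|f_i\chi_{B^*}|^{p_i}\Bigr)^{1/p_i}\lesssim\prod_{i=1}^m\|f_i\|_{L^{(p_i,\varphi_i)}}.
\]
For the far piece $T(\vec{f^\infty})$ and the mixed pieces, I apply the pointwise bounds (3.1) and (3.2) together with Lemma~\ref{lem2.3}, which under (1.15) yields $\int_{2r}^\infty\varphi(z,t)^{1/p}/t\,dt\lesssim\varphi(z,r)^{1/p}=\varphi(B)^{1/p}$. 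Taking $L^{(p,\varphi)}$-norms and using (1.10) or at least the $z$-independence of the final bound gives the desired control.

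Finally, the well-definedness and ball-independence claim follows because (3.1) and (3.2) show each piece in (1.7) converges absolutely for a.e.\ $x$, and the sum reproduces the original integral formula for $T(\vec f)(x)$ whenever $x\notin\bigcap_i\supp f_i$; the decomposition with respect to two different balls both containing $x$ yields the same value by reorganizing the $f_i^0$/$f_i^\infty$ terms. The main technical obstacle is bookkeeping the multilinear Hölder step so that exactly the combination $\prod\varphi_i^{1/p_i}=\varphi^{1/p}$ appears, and ensuring that the doubling of $\varphi$ is strong enough to convert the dyadic sum into the clean tail integral; once this is set up, the rest is straightforward.
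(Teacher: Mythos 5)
Your proposal is correct and follows essentially the same route as the paper: the same dyadic decomposition of $(\mathbb{R}^n\setminus 2B)^m$ into annuli, the same pointwise size bound from (1.1), the same Hölder step combined with (1.13) and doubling (1.9) to produce the tail integral $\int_{2r}^\infty\varphi(z,t)^{1/p}t^{-1}\,dt$, and the same three-way splitting (local via $L^{p_1}\times\cdots\times L^{p_m}\to L^p$ boundedness, far and mixed via the pointwise bounds plus Lemma~\ref{lem2.3}). The one-sentence explanation you give for (3.2) and the ball-independence sketch match what the paper asserts tersely ("Similarly" and the brief remark around (1.7)), so there is nothing to flag.
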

\begin{lemma}\label{lm3.2}\,\
Under the assumption in Theorem $\ref{the1.6}~(i)$. For all $b_j\in \mathcal{L}^{(1,\psi)}(\mathbb{R}^n)$, $f_i\in L^{(p_i,\varphi_i)}(\mathbb{R}^n),\,i,j=1,\ldots m,$ and all balls $ B=B(z,r),\,x\in B$, we have
\begin{equation}\label{eq3.4}
\begin{split}
\int_{(\mathbb{R} ^n)^m}\big|(b_j-b_{B^*}^j) K(x,\vec{y})\vec{f^{\infty}}\big| d\vec{y}
&\lesssim\int_{r}^{\infty }\frac{\psi(z,t) }{t}  \Big( \int_{t}^{\infty }\frac{\varphi (z,u)^{1/p} }{u}du \Big) dt
\\&\qquad\times\| b_j  \|_{\mathcal{L}^{(1,\psi )} } \prod_{i=1}^{m} \| f_i  \|_{L^{(p_i,\varphi _i)}},
\end{split}
\end{equation}
\begin{equation}\label{eq3.5}
\begin{split}
\int_{(\mathbb{R} ^n)^m}\big|(b_j-b_{B^*}^j) K(x,\vec{y})\widetilde{\sum} \prod_{i=1}^{m}f_{i}^{\alpha_{i}} \big| d\vec{y}
&\lesssim\int_{r}^{\infty }\frac{\psi(z,t) }{t}   \Big( \int_{t}^{\infty }\frac{\varphi (z,u)^{1/p} }{u}du  \Big) dt
\\&\qquad\times \| b_j  \|_{\mathcal{L}^{(1,\psi )} } \prod_{i=1}^{m}  \| f_i \|_{L^{(p_i,\varphi _i)}},
\end{split}
\end{equation}
where  $b_{B^*}^j= \fint_{B^*}b_j(y_j)dy_j$.
\begin{proof}
For $(\ref{eq3.4})$. If $ x\in B(z,r),\, y_i\notin 2B $, then $|z-y_i|/2 \leq |x-y_i|\leq(3/2)|z-y_i|$ and $|x-y_i|\sim| z-y_i|,\, i=1, \ldots, m $. Since $ (y_1, \ldots , y_m) \in (2^{k+2}B)^m \backslash(2^{k+1}B)^m$, there exists $i_0,\, 1 \leq i_0 \leq m$ such that $y_{i_0}\notin 2^{k+1}B$, which yields $|z-y_{i_0}|> 2^{k+1}r$, so that $\sum_{i=1}^{m} | z-y_i|> 2^{k+1}r$. By H\"{o}lder's inequality and Lemma $\ref{lem2.2}$, (\ref{eq1.9}), (\ref{eq1.13}), we obtain
\begin{align*}
&\int_{(\mathbb{R} ^n)^m}\big|(b_j(y_j)-b_{B^*}^j) K(x,y_1,\dots ,y_m)\vec{f^{\infty} } \big| d\vec{y}
\\&\qquad\lesssim\sum_{k=0}^{\infty }\int_{(2^{k+2}B)^m}  \frac{\prod_{i=1}^{m}| b_j(y_j)-b_{B^*}^j|^{\delta_{ij} } | f_i |  }{(2^{k+2}r)^{mn} }d\vec{y}
\\&\qquad\leq\sum_{k=0}^{\infty }\prod_{i=1}^{m}\fint_{2^{k+2}B} | b_j(y_j)-b_{B^*}^j|^{\delta_{ij} } | f_i |  dy_i
\\&\qquad\leq\sum_{k=0}^{\infty } \Big( \fint_{2^{k+2}B}| b_j(y_j)-b_{B^*}^j|^{p_j'} dy_j  \Big) ^{1/p_j'}\prod_{i=1}^{m}   \Big( \fint_{2^{k+2}B}| f_i |^{p_i}dy_i   \Big) ^{1/{p_i}}
\\&\qquad\lesssim\sum_{k=0}^{\infty }\int_{r}^{2^{k+1}r} \frac{\psi (z,t)}{t}dt \varphi (z,2^{k+2}r)^{1/p} \| b_j \|_{\mathcal{L}^{(1,\psi )} } \prod_{i=1}^{m}  \| f_i \|_{L^{(p_i,\varphi _i)}}
\\&\qquad\lesssim\sum_{k=0}^{\infty }\int_{2^{k+1}r}^{2^{k+2}r}  \Big(\int_{r}^{u}\frac{\psi (z,t)}{t}dt   \Big ) \frac{\varphi  (z,u)^{1/p}}{u}du \| b_j \|_{\mathcal{L}^{(1,\psi )} } \prod_{i=1}^{m} \| f_i  \|_{L^{(p_i,\varphi _i)}}
\\&\qquad=\int_{r}^{\infty }\frac{\psi (z,t)}{t} \Big( \int_{t}^{\infty} \frac{\varphi  (z,u)^{1/p}}{u}du  \Big) dt \| b_j  \|_{\mathcal{L}^{(1,\psi )} } \prod_{i=1}^{m}  \| f_i \|_{L^{(p_i,\varphi _i)}},
\end{align*}
where $\delta _{ij}=\begin{cases}
  &1, \text{ if }, i=j, \\
  & 0,\text{ if }, i\neq j.
\end{cases}$ Therefore, we have $(\ref{eq3.4})$.

For $(\ref{eq3.5})$, we consider $\alpha_1,\ldots,\alpha_m$ such that $\alpha_{j_1} = \ldots = \alpha_{j_l} = \infty$, for some $\{j_1, \ldots , j_l\} \subset\{1, \ldots , m\}$, where
$1 \leq l < m$. Without loss of generality, we consider only the case $\alpha_1 = \ldots = \alpha_s = \infty,\, 1 \leq s < m$, since the other ones follow in analogous way. Since $x ,\,z\in B,\, (y_1, \ldots , y_s) \in (2^{k+2}B)^s \backslash(2^{k+1}B)^s$, there exists $i_0,\, 1 \leq i_0 \leq s$ such that $y_{i_0}\notin 2^{k+1}B$, which yields $| z-y_{i_0}|> 2^{k+1}r$, so that $\sum_{i=1}^{s}|x-y_i|\sim\sum_{i=1}^{s} | z-y_i|> 2^{k+1}r$. By H\"{o}lder's inequality and Lemma $\ref{lem2.2}$, (\ref{eq1.9}), (\ref{eq1.13}), we obtain
\begin{align*}
&\int_{(\mathbb{R} ^n)^m}|(b_j(y_j)-b_{B^*}^j) K(x,y_1,\dots ,y_m)\prod_{i=1}^{m}f_{i}^{\alpha_{i}} | d\vec{y}
\\&\qquad\leq\sum_{k=0}^{\infty }\int_{(2^{k+2}B)^s}  \int_{( 2B)^{m-s}}\frac{\prod_{i=1}^{m}| b_j(y_j)-b_{B^*}^j|^{\delta_{ij} } | f_i |  }{(2^{k+2}r)^{mn} }d\vec{y}
\\&\qquad\leq\sum_{k=0}^{\infty }\int_{(2^{k+2}B)^s}  \int_{(2^{k+2}B)^{m-s}}\frac{\prod_{i=1}^{m}| b_j(y_j)-b_{B^*}^j|^{\delta_{ij} } | f_i |  }{(2^{k+2}r)^{mn} }d\vec{y}
\\&\qquad=\int_{r}^{\infty }\frac{\psi (z,t)}{t} \Big( \int_{t}^{\infty} \frac{\varphi  (z,u)^{1/p}}{u}du  \Big) dt \| b_j  \|_{\mathcal{L}^{(1,\psi )} } \prod_{i=1}^{m}  \| f_i \|_{L^{(p_i,\varphi _i)}}.
\end{align*}
Therefore, we have $(\ref{eq3.5})$
\end{proof}
\end{lemma}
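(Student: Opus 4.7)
The plan is to prove (3.4) by emulating the annular decomposition used in Lemma \ref{lm3.1}, with an extra factor $|b_j-b_{B^*}^j|$ controlled by Lemma \ref{lem2.2}, and then to reduce (3.5) to the same mechanism. Since $f_i^\infty=f_i\chi_{(2B)^c}$, the integration in (3.4) runs over $(\mathbb{R}^n\setminus 2B)^m$, which I would decompose into dyadic shells $(2^{k+2}B)^m\setminus(2^{k+1}B)^m$ for $k\ge 0$. On each shell some coordinate $y_{i_0}$ lies outside $2^{k+1}B$, so for $x\in B$ one has $\sum_i|x-y_i|\gtrsim 2^{k+1}r$, hence $|K(x,\vec y)|\lesssim(2^{k+1}r)^{-mn}$.

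Next, enlarging the integration domain to the whole cube $(2^{k+2}B)^m$, the integrand separates into a product of one-variable averages since $|b_j(y_j)-b_{B^*}^j|$ depends only on $y_j$. H\"older's inequality with exponents $(p_j',p_j)$ on the $j$-th factor and $p_i$ on the remaining factors yields
\[
\fint_{2^{k+2}B}|b_j-b_{B^*}^j||f_j|\,dy_j
\le\Bigl(\fint_{2^{k+2}B}|b_j-b_{B^*}^j|^{p_j'}\Bigr)^{1/p_j'}\Bigl(\fint_{2^{k+2}B}|f_j|^{p_j}\Bigr)^{1/p_j}.
\]
Applying Lemma \ref{lem2.2} (with inner radius $2r$ and outer radius $2^{k+2}r$) bounds the first factor by $\|b_j\|_{\mathcal{L}^{(1,\psi)}}\int_r^{2^{k+2}r}\psi(z,t)/t\,dt$, while the product of the $f_i$-averages, together with the hypothesis $\prod\varphi_i^{1/p_i}=\varphi^{1/p}$, contributes $\varphi(z,2^{k+2}r)^{1/p}\prod_i\|f_i\|_{L^{(p_i,\varphi_i)}}$.

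To finish (3.4) I would convert the resulting sum over $k$ into the nested integral on the right. Using the doubling condition on $\varphi$, one has $\varphi(z,2^{k+2}r)^{1/p}\lesssim\int_{2^{k+1}r}^{2^{k+2}r}\varphi(z,u)^{1/p}/u\,du$, and Fubini (interchanging the $t$- and $u$-variables) produces
\[
\int_r^\infty\frac{\psi(z,t)}{t}\Bigl(\int_t^\infty\frac{\varphi(z,u)^{1/p}}{u}\,du\Bigr)\,dt.
\]
For (3.5) I would fix the set of indices with $\alpha_i=\infty$; by symmetry take it to be $\{1,\ldots,s\}$ with $1\le s<m$. The integration is over $(\mathbb{R}^n\setminus 2B)^s\times(2B)^{m-s}$, but the shell argument in the first $s$ coordinates still forces $\sum|x-y_i|\gtrsim 2^{k+1}r$ on $(2^{k+2}B)^s\setminus(2^{k+1}B)^s$, and replacing $(2B)^{m-s}$ by the larger $(2^{k+2}B)^{m-s}$ is harmless; the rest of the argument is identical to (3.4).

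The main obstacle I expect is precisely the bookkeeping that converts the series $\sum_k \bigl(\int_r^{2^{k+2}r}\psi(z,t)/t\,dt\bigr)\varphi(z,2^{k+2}r)^{1/p}$ into the double integral above, since one must simultaneously shift $\varphi$ onto a dyadic interval via doubling and then swap the order of integration. A minor technical point is checking that Lemma \ref{lem2.2} is applicable for $k=0$, where the condition $2r<s$ is only borderline; this can be absorbed into the overall constant using the doubling property of $\psi$.
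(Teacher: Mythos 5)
Your proposal matches the paper's proof step for step: the same dyadic shell decomposition, the same kernel bound from (1.1), the same enlargement to $(2^{k+2}B)^m$, the same H\"older split with exponents $(p_j',p_j)$ and $p_i$, the same invocation of Lemma \ref{lem2.2} to control the $b_j$-average, the same use of doubling to convert $\varphi(z,2^{k+2}r)^{1/p}$ into a dyadic integral, and the same Fubini swap to arrive at the nested double integral; the treatment of (3.5) by fixing $\alpha_1=\dots=\alpha_s=\infty$ and enlarging $(2B)^{m-s}$ to $(2^{k+2}B)^{m-s}$ is also identical. Your remark about the borderline $k=0$ application of Lemma \ref{lem2.2} (where $2\cdot 2r < 2^{k+2}r$ fails with equality) is a legitimate minor point that the paper silently absorbs via doubling, exactly as you suggest.
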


To show that definition (\ref{eq1.8}) is well defined, we give the following remark.
\begin{remark}\label{rm3.3}\,\
Under the assumption in Theorem $\ref{the1.6}~(i)$, let $b_i \in \mathcal{L}^{(1,\psi)}(\mathbb{R}^n)$ and
$f_i \in L^{(p_i,\varphi_i)}(\mathbb{R}^n),\,i=1,\ldots m$. Then $f_i$ is in $L_{\mathrm{loc}}^ {p_i}(\mathbb{R}^n)$ and $b_if_i$ is in $L_{\mathrm{loc}}^{ q_i}(\mathbb{R}^n)$ for all $q_i < p_i,\,i=1,\ldots,m,$ by Lemma $\ref{lem2.1}$. Hence, $T ( \vec{f^\infty} )$ and $T ({b_i}\vec{f^\infty})$ are well defined for any ball $B = B(z,r)$. By $(\ref{eq1.14})$, $(\ref{eq1.15})$ and Lemma $\ref{lem2.3}$, we have,
$$
\int_{r}^{\infty } \frac{\varphi(z,t)^{1/p} }{t} dt\lesssim \varphi(z,r)^{1/p},
$$
and
\begin{align*}
\int_{r}^{\infty } \frac{\psi(z,t) }{t} \Big ( \int_{t}^{\infty } \frac{\varphi(z,u)^{1/p} }{u} du\Big )dt
&\lesssim \int_{r}^{\infty }\frac{\psi(z,t)\varphi(z,t)^{1/p} }{t}dt
\\&\lesssim\int_{r}^{\infty }\frac{\varphi(z,t)^{1/q} }{t}dt
\lesssim\varphi(z,r)^{1/q}.
\end{align*}
Then, by Lemma $\ref{lm3.1}\sim\ref{lm3.2}$, and $(\ref{eq1.6})$, the integrals
$$ T\big(|\vec{f^\infty}|\big)(x),~~\int_{(\mathbb{R} ^n)^{m}}\big| b_j(y_j)K(x,\vec{y} )\vec{f^\infty}\big|  d\vec{y},$$
and
$$ T\Big(\big|\widetilde{\sum }\prod_{i=1}^{m} f _i^{\alpha_i}\big|\Big)(x),~~\int_{(\mathbb{R} ^n)^{m}}\big| b_j(y_j)K(x,\vec{y} )\widetilde{\sum }\prod_{i=1}^{m} f _i^{\alpha_i}\big|  d\vec{y},$$
converge for all $j=1,\ldots,m$. That is, the integrals
$$T_{\vec{b}}^j(\vec{f})(x)=[b_j,T](\vec{f^0})(x)+[b_j,T](\vec{f^\infty})(x)+\widetilde{\sum } [b_j,T](f_1^{\alpha_1}\ldots f_m^{\alpha_m})(x),\,\forall x\in B,$$
is well defined, where $j=1,\ldots,m$. Moreover, if $x \in B_1 \cap B_2$, taking $B_3$ such that $B_1 \cup B_2 \subset B_3$, for all $j=1,\ldots,m$, then
\begin{align*}
&\Big\{[b_j,T](\vec{f^0})_{k,l,l}(x)+[b_j,T](\vec{f^\infty})_{k,l,l}(x)+[b_j,T]\Big(\widetilde{\sum} \prod_{i=1}^{m}f_{i}^{\alpha_{i}}(y_{i})\Big)_{k,l,l}(x) \Big\}
\\&\qquad-\Big\{[b_j,T](\vec{f^0})_{k,3,l}(x)+[b_j,T](\vec{f^\infty})_{k,3,l}(x)+[b_j,T]\Big(\widetilde{\sum} \prod_{i=1}^{m}f_{i}^{\alpha_{i}}(y_{i})\Big)_{k,3,l}(x)\Big\}
\\&\quad=-[b_j,T]\Big(f_k(y_{k})\chi_{(2 B_{3} \backslash 2 B_{l})}(y_{k}){\sum_{\alpha_{1}, \ldots,\alpha_{k-1},\alpha_{k+1},\ldots ,\alpha_{m} \in\{0, \infty\}}} \prod_{i=1,i\neq k}^{m}f_{i}^{\alpha_{i}}(y_{i})\Big)(x)
\\&\qquad+[b_j,T]\Big(f_k(y_{k})\chi_{( 2 B_{l}\backslash 2 B_{3})}(y_{k}){\sum_{\alpha_{1}, \ldots,\alpha_{k-1},\alpha_{k+1},\ldots ,\alpha_{m} \in\{0, \infty\}}} \prod_{i=1,i\neq k}^{m}f_{i}^{\alpha_{i}}(y_{i})\Big)(x)
\\&\qquad-[b_j,T]\Big(f_k(y_{k})\chi_{(2 B_{3} \backslash 2 B_{l})^\complement}(y_{k}){\sum_{\alpha_{1}, \ldots,\alpha_{k-1},\alpha_{k+1},\ldots ,\alpha_{m} \in\{0, \infty\}}} \prod_{i=1,i\neq k}^{m}f_{i}^{\alpha_{i}}(y_{i})\Big)(x)
\\&\qquad+[b_j,T]\Big(f_k(y_{k})\chi_{( 2 B_{l}\backslash 2 B_{3})^\complement}(y_{k}){\sum_{\alpha_{1}, \ldots,\alpha_{k-1},\alpha_{k+1},\ldots ,\alpha_{m} \in\{0, \infty\}}} \prod_{i=1,i\neq k}^{m}f_{i}^{\alpha_{i}}(y_{i})\Big)(x)
=0,
\end{align*}
where $l=1,2.\, (\cdot)_{k,l,s}$ mean to only  $f_k$ is decompose by $B_l$, the others $f_i(i\neq k)$ are decompose by $B_s$  in $\vec{f}=(f_1,\ldots,f_m)$. That is,
\begin{align*}
&\Big\{[b_j,T](\vec{f^0})_{1}(x)+[b_j,T](\vec{f^\infty})_{1}(x)+[b_j,T]\Big(\widetilde{\sum} \prod_{i=1}^{m}f_{i}^{\alpha_{i}}(y_{i})\Big)_{1}(x) \Big\}
\\&\qquad=\Big\{[b_j,T](\vec{f^0})_{2}(x)+[b_j,T](\vec{f^\infty})_{2}(x)+[b_j,T]\Big(\widetilde{\sum} \prod_{i=1}^{m}f_{i}^{\alpha_{i}}(y_{i})\Big)_{2}(x)\Big\}.
\end{align*}
where $(\cdot)_{l}$ mean to $f_i,\,i=1,\ldots,m$ are decompose by $B_l,\,l=1,2$, in $\vec{f}$.

This shows that $[\vec{b},T](\vec{f}) (x)$ in $(\ref{eq1.8})$ is independent of the choice of the ball $B$ containing $x$, since $i,\,j=1,\ldots,m$.
\end{remark}
\begin{lemma}\label{lm3.4}\,\
Under the assumption of Theorem $\ref{the1.6}~(i)$. Then, for all $b_j \in \mathcal{L}^{(1,\psi)}(\mathbb{R}^n)$, $f_i \in L^{(p_i,\varphi_i)}(\mathbb{R}^n),\,i,\,j=1,\ldots m$ and all balls $B = B(z,r)$, we have
\begin{align*}
 \Big| \fint _{B} \Big\{  \int_{((2B)^\complement)^m } K(x,\vec{y})  (  b_j(x)-b_j(y_j)  ) \vec{f} (y)d\vec{y}  \Big\}  dx  \Big|
\lesssim \varphi(z,r)^{1 / q}\|b_j\|_{\mathcal{L}^{(1, \psi)}} \prod_{i=1}^{m} \| f_i\|_{L^{(p_i, \varphi_i )}}.
\end{align*}
\begin{proof}
For $x \in B(z,r)$, let $b_{B}^j=\fint_B b_j(y_j)dy_j$, it follows
\begin{align*}
 \Big| \int_{(\mathbb{R} ^n)^m } K(x,\vec{y})  [b_j(x)-b_j(y_j)]\vec{f} (y)d\vec{y}  \Big|
&\le  | b_j(x)-b_{B}^j   |  \int_{((2B)^\complement)^m } | K(x,\vec{y}) |   \mid \vec{ f} (y)\mid  d\vec{y}
\\&\qquad+\int_{((2B)^\complement)^m } | b_j(y_j)-b_{B}^j |  | K(x,\vec{y})  |   \mid \vec{ f} (y)\mid d\vec{y}
\\&=:G_1(x)+G_2(x).
\end{align*}

For $G_1(x)$, by Lemma $\ref{lm3.1}$, we obtain
$$G_1(x)\leq\left | b_j(x)-b_{B}^j\right |\int_{2r}^{\infty } \frac{\varphi (z,t)^{1/p}}{t} dt\prod_{i=1}^{m}\| f_i \|_{L^{(p_i, \varphi_i )}}.$$
Then, by Lemma $\ref{lem2.3}$,$~(\ref{eq1.9})$, $(\ref{eq1.13})$, $(\ref{eq1.14})$, we obtain
\begin{align*}
\fint_{B}G_1(x)dx&\lesssim \fint_{B}\left | b_j(x)-b_{B}^j\right |\int_{2r}^{\infty } \frac{\varphi (z,t)^{1/p}}{t} dtdx\prod_{i=1}^{m}\| f_i \|_{L^{(p_i, \varphi_i )}}
\\&\lesssim\fint_{B}\left | b_j(x)-b_{B}^j\right |\varphi (z,r)^{1/p}dx\prod_{i=1}^{m}\| f_i \|_{L^{(p_i, \varphi_i )}}
\\&\lesssim \psi  (z,r)\varphi (z,r)^{1/p} \left \| b_j \right \|_{\mathcal{L} ^{(1,\psi )}}\prod_{i=1}^{m}\| f_i \|_{L^{(p_i, \varphi_i )}}
\\&\lesssim \varphi (z,r)^{1/q}\left \| b \right \|_{\mathcal{L} ^{(1,\psi )}}\prod_{i=1}^{m}\| f_i \|_{L^{(p_i, \varphi_i )}}.
\end{align*}

For $G_2(x)$, by Lemma $\ref{lem2.3}$, Lemma $\ref{lm3.2}$,  $(\ref{eq1.9})$, $(\ref{eq1.13})$, $(\ref{eq1.14})$, we have
$$\fint_{B}G_2(x)dx\leq\varphi (z,r)^{1/q} \left \| b_j \right \|_{\mathcal{L} ^{(1,\psi )}}\prod_{i=1}^{m} \| f_i \|_{L^{(p_i , \varphi_i  )}}.$$
Combining the methods of estimating $G_1(x)$ and $G_2(x)$, we obtain the desired estimate.
\end{proof}
\end{lemma}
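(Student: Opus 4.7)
The plan is to decompose the integrand by subtracting an appropriate ball average of $b_j$ and then reduce to the pointwise kernel estimates already proved in Lemma \ref{lm3.1} and Lemma \ref{lm3.2}. Concretely, setting $b_B^j := \fint_B b_j(y_j)\,dy_j$ and using
\[
b_j(x) - b_j(y_j) = \bigl(b_j(x) - b_B^j\bigr) - \bigl(b_j(y_j) - b_B^j\bigr),
\]
the integral over $((2B)^\complement)^m$ splits into two pieces $G_1(x)$ and $G_2(x)$. In $G_1(x)$ the factor $|b_j(x) - b_B^j|$ comes outside the $\vec{y}$-integral, leaving a pure kernel estimate; in $G_2(x)$ the $b_j$-factor stays under the $\vec{y}$-integral and is exactly the quantity controlled by Lemma \ref{lm3.2} (modulo replacing $b_{B^*}^j$ with $b_B^j$, whose difference is absorbed via the doubling of $\psi$ and Lemma \ref{lem2.2}).

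For $G_1$, Lemma \ref{lm3.1} yields
\[
G_1(x) \lesssim |b_j(x) - b_B^j|\int_{2r}^{\infty} \frac{\varphi(z,t)^{1/p}}{t}\,dt\prod_{i=1}^{m} \|f_i\|_{L^{(p_i,\varphi_i)}},
\]
and hypothesis \eqref{eq1.15} combined with Lemma \ref{lem2.3} collapses the $t$-integral to $\varphi(z,r)^{1/p}$. Averaging in $x$ over $B$, the definition of $\mathcal{L}^{(1,\psi)}$ gives $\fint_B |b_j(x) - b_B^j|\,dx \lesssim \psi(z,r)\|b_j\|_{\mathcal{L}^{(1,\psi)}}$, and hypothesis \eqref{eq1.14} then converts $\psi(z,r)\varphi(z,r)^{1/p}$ into the target $\varphi(z,r)^{1/q}$.

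For $G_2$, Lemma \ref{lm3.2} immediately provides
\[
G_2(x) \lesssim \int_{r}^{\infty}\frac{\psi(z,t)}{t}\Bigl(\int_{t}^{\infty}\frac{\varphi(z,u)^{1/p}}{u}\,du\Bigr)dt\;\|b_j\|_{\mathcal{L}^{(1,\psi)}}\prod_{i=1}^m\|f_i\|_{L^{(p_i,\varphi_i)}},
\]
so the remaining task is purely analytic. By Lemma \ref{lem2.3} the inner $u$-integral is $\lesssim \varphi(z,t)^{1/p}$; hypothesis \eqref{eq1.14} then gives $\psi(z,t)\varphi(z,t)^{1/p} \lesssim \varphi(z,t)^{1/q}$; and a second application of Lemma \ref{lem2.3} (now with exponent $q$) collapses the outer $t$-integral to $\varphi(z,r)^{1/q}$. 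Since $G_2(x)$ no longer depends on $x$ in a way that requires a Campanato-type estimate, averaging over $B$ is free.

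The main obstacle is keeping the chain of integral inequalities properly book-kept: one must apply Lemma \ref{lem2.3} with the correct exponent at each pass (first with $p$, later with $q$) and insert \eqref{eq1.14} at precisely the right step so that the $\psi$-factor is consumed exactly once and the final output carries the sharp exponent $1/q$. Beyond that, everything is a direct combination of the already-established pointwise bounds.
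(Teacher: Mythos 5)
Your proposal is correct and follows essentially the same route as the paper: decompose $b_j(x)-b_j(y_j)$ through the ball average $b_B^j$, bound $G_1$ via Lemma \ref{lm3.1} together with Lemma \ref{lem2.3}, the Campanato norm and \eqref{eq1.14}, and bound $G_2$ via Lemma \ref{lm3.2} and the chain of inequalities from Lemma \ref{lem2.3} and \eqref{eq1.14}. Your explicit remark that $b_B^j$ and $b_{B^*}^j$ differ only up to a harmless correction is a small extra precision that the paper itself elides, but it does not constitute a different argument.
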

\begin{lemma}\label{lm3.5}\,\
Let $m \in N$ and $\vec{b} = (b_1,...,b_m)$ be a collection of locally integrable
functions. For any $B\subset \mathbb{R}^n $, the following statements are equivalent:\\
$(a)$ There exists a constant $C_1$ such that
$$[\vec{b}]_{*}:=\sup _{B} \frac{1}{\psi(B) |B|^{m+1}} \int_{B} \int_{B^{m}}\Big|\sum_{j=1}^{m}(b_{j}(x)-b_{j}(y_{j}))\Big| d \vec{y} d x \leq C_1,$$
$(b)$ There exists a constant $C_2$ such that
$$[\vec{b}]_{* *}:=\sup _{B} \frac{1}{\psi(B) |B|^{m}} \int_{B^{m}}\Big|\sum_{j=1}^{m}(b_{j}(x_{j})-b_B^j)\Big| d \vec{x} \leq C_2,$$
$(c)$ There exists a constant $C_3$ such that
$$[\vec{b}]_{* * *}:=\sup _{B} \frac{1}{\psi(B) |B|^{2 m}} \int_{B^{m}} \int_{B^{m}}\Big|\sum_{j=1}^{m}(b_{j}(x_{j})-b_{j}(y_{j}))\Big| d \vec{y} d \vec{x} \leq C_3,$$
$(d)$ $b_1,\ldots,b_m \in \mathcal{L}^{(1,\psi)}(\mathbb{R}^n).$
\begin{proof}
$(a)\Longrightarrow(b).$
\begin{align*}
\frac{1}{\psi(B) |B|^{m}}\int_{B^{m}}\Big|\sum_{j=1}^{m}(b_{j}(x_{j})-b_B^j)\Big| d \vec{x}&=\frac{1}{\psi(B) |B|^{m}}\int_{B^{m}}\Big|\sum_{j=1}^{m}(b_{j}(y_{j})-b_B^j)\Big| d \vec{y}
\\&=\frac{1}{\psi(B) |B|^{m}}\int_{B^{m}}\Big|\sum_{j=1}^{m}(b_{j}(y_{j})-\frac{1}{| B| } \int_{B}^{} b_{j}(x)dx)\Big| d \vec{y}
\\&=\frac{1}{\psi(B) |B|^{m+1}}\int_{B}\int_{B^{m}} \Big|\sum_{j=1}^{m}(b_{j}(y_{j})- b_{j}(x))\Big| d \vec{y}dx.
\end{align*}
$(b)\Longrightarrow(c).$
\begin{align*}
&\frac{1}{\psi(B) |B|^{2m}}\int_{B^{m}}\int_{B^{m}}\Big|\sum_{j=1}^{m}(b_{j}(x_{j})-b_{j}(y_j))\Big| d \vec{y}d \vec{x}
\\&\qquad=\frac{1}{\psi(B) |B|^{2m}}\int_{B^{m}}\int_{B^{m}}\Big|\sum_{j=1}^{m}(b_{j}(x_{j})-b_B^j+b_B^j-b_{j}(y_j))\Big| d \vec{y}d \vec{x}
\\&\qquad\leq\frac{1}{\psi(B) |B|^{m}}\int_{B^{m}}\Big|\sum_{j=1}^{m}(b_{j}(x_{j})-b_B^j)\Big| d \vec{x}
\\&\qquad\qquad+\frac{1}{\psi(B) |B|^{m}}\int_{B^{m}}\Big|\sum_{j=1}^{m}(b_{j}(y_j)-b_B^j)\Big| d \vec{y}\lesssim C_2.
\end{align*}
$(c)\Longrightarrow(d).$
We denote  $$\Omega_{m}=\big\{\vec{\sigma}_{m}=(\sigma_{1}, \ldots, \sigma_{m}): \sigma_{j} \in\{-1,1\}, i=1, \ldots, m\big\}.$$
For any $a_j \in  \mathbb{R}^n$, \cite{DWZT} establish   as following inequality,
\begin{equation}\label{eq3.7}
\sum_{j=1}^{m} | a_j| \le \sum_{\vec{\sigma}_m\in \Omega_m } \Big| \sum_{j=1}^{m}\sigma_j a_j  \Big|.
\end{equation}
Applying the inequality $(\ref{eq3.7})$, we obtain that for any ball $B$,
\begin{align*}
\frac{1}{|B|^{2 m}} \int_{B^{2 m}} \sum_{j=1}^{m}|b_{j}(x_{j})-b_{j}(y_{j})| d \vec{x} d \vec{y}
&\leq \sum_{\vec{\sigma}_{k+1} \in \Omega_{k+1}} \frac{1}{|B|^{2 m}} \int_{B^{2 m}}\Big|\sum_{j=1}^{m} \sigma_{j}(b_{j}(x_{j})-b_{j}(y_{j}))\Big| d \vec{x} d \vec{y}
\\&\leq \sum_{\vec{\sigma}_{k+1} \in \Omega_{k+1}}[\vec{b}]_{* * *}\leq C[\vec{b}]_{* * *},
\end{align*}
which yields that for $j = 1,\ldots,m$,
\begin{align*}
&\frac{1}{\psi(B) }\fint_{B}| b_{j}(x_{j})-b_B^{j}| d {x_{j}}
\le \frac{1}{\psi(B) |B|^2}\int_{B^2}| b_{j}(x_{j})-b{(y_j)}| d {x_{j}}d {y_{j}}\le C[\vec{b}]_{* * *}.
\end{align*}
Then, $b_1,\ldots,b_m \in \mathcal{L}^{(1,\psi)}(\mathbb{R}^n)$.

$(d)\Longrightarrow(a)$.  For any $B$, we have
\begin{align*}
\frac{1}{\psi(B) |B|^{m+1}} \int_{B} \int_{B^{m}}|\sum_{j=1}^{m}(b_{j}(x)-b_{j}(y_{j}))| d \vec{y} d x
&\le \sum_{j=1}^{m}\frac{1}{\psi(B) } \fint_{B} \fint_{B}| b_{j}(x)-b_{j}(y_{j})|dy_jdx
\\&\le \sum_{j=1}^{m} \frac{1}{\psi(B)} \fint_{B} | b_{j}(x)-b_B^{j}|  d x
\\&\qquad+\sum_{j=1}^{m}\frac{1}{\psi(B)}  \fint_{B}| b_{j}(y_{j})-b_B^{j}| dy_{j}
\\&\lesssim \|\vec{b}\|_{(\mathcal{L}^(1,\psi))^m}. \end{align*}
\end{proof}
\end{lemma}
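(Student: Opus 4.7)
The plan is to run the implications cyclically: $(a)\Rightarrow(b)\Rightarrow(c)\Rightarrow(d)\Rightarrow(a)$. Each step is primarily a matter of massaging averages over products of balls using Fubini, the triangle inequality, and, in the crucial step, a symmetrization trick to pass from signed to absolute sums.

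For $(a)\Rightarrow(b)$, I would simply rewrite $b_B^j=\fint_B b_j(x)\,dx$, pull this average inside $\sum_j(b_j(y_j)-b_B^j)$, and apply Fubini. That is, the inner sum becomes $\sum_j\fint_B(b_j(y_j)-b_j(x))\,dx$, and after renaming variables and normalizing we recognize exactly the quantity $[\vec b]_*$ controlling $[\vec b]_{**}$.

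For $(b)\Rightarrow(c)$, I would insert $\pm b_B^j$ inside each summand $b_j(x_j)-b_j(y_j)$ to split it as $(b_j(x_j)-b_B^j)-(b_j(y_j)-b_B^j)$. The triangle inequality in the outer sum and symmetry of the two resulting double integrals over $B^m\times B^m$ reduce each piece to $[\vec b]_{**}$.

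The main obstacle is $(c)\Rightarrow(d)$, where we must convert control of the signed sum $\sum_j(b_j(x_j)-b_j(y_j))$ into control of each individual $b_j$. The key tool is the elementary inequality recalled in the excerpt,
\[
\sum_{j=1}^m|a_j|\le\sum_{\vec\sigma_m\in\Omega_m}\Big|\sum_{j=1}^m\sigma_j a_j\Big|,
\]
applied to $a_j=b_j(x_j)-b_j(y_j)$. Since $[\vec b]_{***}$ is the supremum over all balls and is symmetric under sign choices (replace $\vec y$ by permutations and use that the condition is intrinsically about the signed sum evaluated with arbitrary signs via the same kind of symmetrization), each of the $2^m$ terms on the right is bounded by a constant multiple of $\psi(B)\,|B|^{2m}\,[\vec b]_{***}$. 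Dividing yields
\[
\frac{1}{|B|^{2m}}\int_{B^{2m}}\sum_{j=1}^m|b_j(x_j)-b_j(y_j)|\,d\vec x\,d\vec y\lesssim \psi(B)\,[\vec b]_{***},
\]
and since each summand is non-negative, keeping only the $j$-th term and then integrating out the remaining $2m-2$ variables gives $\fint_B\fint_B|b_j(x_j)-b_j(y_j)|\,dx_j\,dy_j\lesssim\psi(B)[\vec b]_{***}$, which dominates $\fint_B|b_j-b_B^j|$ by another triangle inequality and hence bounds $\|b_j\|_{\mathcal L^{(1,\psi)}}$.

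Finally, for $(d)\Rightarrow(a)$, I would insert $\pm b_B^j$ inside $b_j(x)-b_j(y_j)$, apply the triangle inequality on the outer $\sum_j$ and inside the absolute value, and then use Fubini to integrate out the spectator variables in each term; the two pieces that remain are exactly $\fint_B|b_j(x)-b_B^j|\,dx$ and $\fint_B|b_j(y_j)-b_B^j|\,dy_j$, both controlled by $\psi(B)\|b_j\|_{\mathcal L^{(1,\psi)}}$. Summing in $j$ closes the loop and gives $[\vec b]_*\lesssim\|\vec b\|_{(\mathcal L^{(1,\psi)})^m}$.
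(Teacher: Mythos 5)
Your proposal is correct and follows essentially the same cyclic scheme $(a)\Rightarrow(b)\Rightarrow(c)\Rightarrow(d)\Rightarrow(a)$ as the paper, with the same devices in each step: rewriting $b_B^j$ as an average and applying Jensen to get $(a)\Rightarrow(b)$ (you say ``Fubini'' where the operative step is actually the triangle/Jensen inequality after pulling the average inside, and the paper also glosses this with an equality sign, but the idea is the same); inserting $\pm b_B^j$ for $(b)\Rightarrow(c)$; the symmetrization inequality $\sum_j|a_j|\le\sum_{\vec\sigma}\big|\sum_j\sigma_j a_j\big|$ from \cite{DWZT} combined with a sign-absorbing swap $x_j\leftrightarrow y_j$ and dropping all but the $j$-th summand for $(c)\Rightarrow(d)$; and the obvious $\pm b_B^j$ splitting for $(d)\Rightarrow(a)$. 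The only soft spot is your justification that each term $\int_{B^{2m}}\big|\sum_j\sigma_j(b_j(x_j)-b_j(y_j))\big|$ is controlled by $\psi(B)|B|^{2m}[\vec b]_{***}$ --- the clean reason is that $\sigma_j(b_j(x_j)-b_j(y_j))$ becomes $b_j(x_j)-b_j(y_j)$ after swapping the integration variables $x_j$ and $y_j$ for those $j$ with $\sigma_j=-1$, so each term literally equals the original integral; your phrasing about ``permutations'' is imprecise, though the paper leaves this implicit as well.
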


\section{Sharp maximal operator and pointwise estimate\label{sec4}}
In this section, we will establish sharp maximal inequality and pointwise estimate.

For $f\in L_{\mathrm{loc}}^1(\mathbb{R} ^n)$, let
$$
M^{\sharp } f(x)=\sup _{B \ni x} \fint _{B}|f(y)-f_{B}| d y, \, x \in \mathbb{R}^{n},
$$
where the supremum is taken over all balls $B$ containing $x$.
\begin{proposition}\label{pro7.2}\,\
Let $p,\,p_i,\,\eta \in (1,\infty)$ satisfies $\sum_{i=1}^{m} 1/{p_i} =1/p$  and $\varphi,\,\varphi_i,\,\psi :\mathbb{R}^n\times (0,\infty )\rightarrow (0,\infty )$. Let $T$ be an $m$-linear Calder\'{o}n-Zygmund operators with kernel satisfies Definition $\ref{def1.1}$.
Assume that $\varphi,\,\varphi_i\in \mathcal{G}^{dec}$ satisfies $(\ref{eq1.13})$, $(\ref{eq1.15})$ and
$ \psi\in \mathcal{G}^{inc}$ satisfies $(\ref{eq1.10})$, that  $\int_{r}^{\infty } \frac{\psi (x,t)\varphi (x,t)^{1/p}}{t} dt<\infty $ and $\int_{0}^{1} \frac{\omega (t)\log{\frac{1}{t} } }{t} dt<\infty $, for each $x \in \mathbb{R} ^n$ and $r>0$. Then there exists a positive constant $C$ such that, for all $ b_j\in \mathcal{L}^{(1,\psi)}(\mathbb{R}^n)$, $f_i\in L^{(p_i,\varphi_i )}(\mathbb{R}^n)$ and $x\in\mathbb{R}^n,\,i,\,j=1,\ldots,m $,
$$
M^{\#}[\vec{b},T](\vec{f})(x) \leq C\|\vec{b}\|_{(\mathcal{L}^{(1, \psi)})^m} \Big\{[ M_{\psi^{\eta }}(| T(\vec{f})|^{\eta})(x) ] ^{1 / \eta}
+ [ \mathcal{M}_{\psi^{\eta }}(| \vec{f} |^{\eta})(x) ] ^{1 / \eta}\Big\},
$$
where $C$ is a positive constant independent of $f_i$ and $b_j$.
\end{proposition}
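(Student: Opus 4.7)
The plan is to reduce the bound on $M^{\#}[\vec b,T](\vec f)(x)$ to an estimate on each partial commutator $T_{\vec b}^j(\vec f)$ separately, using $[\vec b,T]=\sum_{j=1}^m T_{\vec b}^j$. Fix $j\in\{1,\dots,m\}$ and a ball $B=B(z,r)\ni x$, write $B^*=2B$, $b_{B^*}^j=\fint_{B^*}b_j$. Starting from the identity
\begin{align*}
T_{\vec b}^j(\vec f)(y)=\bigl(b_j(y)-b_{B^*}^j\bigr)T(\vec f)(y)-T\bigl(f_1,\ldots,(b_j-b_{B^*}^j)f_j,\ldots,f_m\bigr)(y),
\end{align*}
decompose $f_i=f_i^0+f_i^\infty$ with $f_i^0=f_i\chi_{B^*}$ to expand the second term as a sum of $2^m$ pieces: the fully local piece $A_0=T(f_1^0,\ldots,(b_j-b_{B^*}^j)f_j^0,\ldots,f_m^0)$, the fully far piece $A_\infty$ (with every $f_i^\infty$), and $2^m-2$ mixed pieces whose sum I call $A_{\mathrm{mix}}$. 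Choose the constant $c_j:=-A_\infty(z)-A_{\mathrm{mix}}(z)$, so that by $M^{\#}f(x)\le 2\sup_{B\ni x}\inf_c\fint_B|f-c|$, the task is to dominate the four quantities $\mathrm{I},\mathrm{II},\mathrm{III},\mathrm{IV}$ defined as $\fint_B$-averages of the principal commutator, $A_0$, $A_\infty(y)-A_\infty(z)$, and $A_{\mathrm{mix}}(y)-A_{\mathrm{mix}}(z)$, respectively.

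The principal term $\mathrm{I}=\fint_B|b_j-b_{B^*}^j|\,|T(\vec f)|$ is handled by H\"older's inequality with exponents $\eta'$ and $\eta$ together with Lemma \ref{lem2.1}: the identification $\mathcal L^{(\eta',\psi^{\eta'})}=\mathcal L^{(1,\psi)}$, combined with a triangle-inequality comparison of $b_{B^*}^j$ with $b_B^j$, yields $(\fint_B|b_j-b_{B^*}^j|^{\eta'})^{1/\eta'}\lesssim \psi(B)\|b_j\|_{\mathcal L^{(1,\psi)}}$. Coupled with $\psi(B)(\fint_B|T(\vec f)|^\eta)^{1/\eta}\le [M_{\psi^\eta}(|T(\vec f)|^\eta)(x)]^{1/\eta}$, this gives the desired bound on $\mathrm{I}$. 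For the local term $\mathrm{II}=\fint_B|A_0|$, I invoke the strong multilinear boundedness $T:L^{q_1}\times\cdots\times L^{q_m}\to L^q$ together with an auxiliary H\"older exponent $s>1$ on the Campanato factor in $(b_j-b_{B^*}^j)f_j^0$. Starting from $\fint_B|A_0|\le|B|^{-1/q}\|A_0\|_{L^q}\lesssim|B|^{-1/q}\prod_i\|g_i\|_{L^{q_i}(B^*)}$, where $g_i=f_i^0$ for $i\ne j$ and $g_j=(b_j-b_{B^*}^j)f_j^0$, the powers of $|B|$ cancel via $\sum 1/q_i=1/q$, Lemma \ref{lem2.1} absorbs the Campanato factor into $\psi(B)\|b_j\|_{\mathcal L^{(1,\psi)}}$, and the residual averages $(\fint_{B^*}|f_i|^{q_i})^{1/q_i}$ and $(\fint_{B^*}|f_j|^{q_js})^{1/(q_js)}$ are dominated via Jensen by $(\fint_{B^*}|f_i|^\eta)^{1/\eta}$, provided the $q_i$ and $s$ are tuned so that each $q_i$ and $q_js$ is at most $\eta$. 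This yields $\mathrm{II}\lesssim\|b_j\|_{\mathcal L^{(1,\psi)}}[\mathcal M_{\psi^\eta}(|\vec f|^\eta)(x)]^{1/\eta}$.

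The remaining terms $\mathrm{III}$ and $\mathrm{IV}$ rely on the kernel regularity (\ref{eq1.2}): for $y\in B$ and $(y_1,\dots,y_m)$ in the dyadic annulus $(2^{k+2}B)^m\setminus(2^{k+1}B)^m$, one has $|K(y,\vec y)-K(z,\vec y)|\lesssim (2^{k+1}r)^{-mn}\omega(2^{-k})$, since $|y-z|\le r$. Integrating the resulting product $|b_j(y_j)-b_{B^*}^j|\prod_i|f_i(y_i)|$ over this annulus and applying H\"older's inequality, the $L^{p'}$-average of $b_j-b_{B^*}^j$ over $2^{k+2}B$ is controlled by $k\,\psi(2^kB)\|b_j\|_{\mathcal L^{(1,\psi)}}$ through the logarithmic estimate (\ref{eq2.3}) of Lemma \ref{lem2.2}, while the $|f_i|$-averages yield $\eta$-averages bounded by $\psi(2^kB)^{-1}[\mathcal M_{\psi^\eta}(|\vec f|^\eta)(x)]^{1/\eta}$. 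The doubling of $\psi$ and condition (\ref{eq1.10}) let me replace $\psi(2^kB)$ by $\psi(B)$ up to constants, so summing over $k\ge 0$ reduces to convergence of $\sum_k k\,\omega(2^{-k})\lesssim\int_0^1\omega(t)\log(1/t)/t\,dt<\infty$, which is exactly the standing hypothesis. The mixed term $\mathrm{IV}$ is handled identically, applying kernel smoothness in the "far" indices and local $L^{q_i}$-averages (as in $\mathrm{II}$) in the "near" ones. The main obstacle lies in the bookkeeping for $\mathrm{III}$ and $\mathrm{IV}$: the logarithmic factor $k$ from the Campanato estimate, the modulus $\omega(2^{-k})$, and the doubling behavior of $\psi$ must combine so that the dyadic tail converges, and the log-Dini condition on $\omega$ is precisely what makes this possible.
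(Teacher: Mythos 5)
Your proposal follows essentially the same decomposition as the paper's own proof: a principal term $(b_j-b_{B^*}^j)T(\vec f)$, a fully local term, a fully far term, and mixed terms, with the far and mixed pieces handled via a telescoping constant, the $\omega$-regularity of the kernel, and the log-Campanato estimate of Lemma~\ref{lem2.2}. The treatment of $\mathrm{I}$, $\mathrm{III}$, and $\mathrm{IV}$ matches the paper's $F_1,F_3,F_4$. Two remarks. First, a cosmetic point: you invoke ``doubling of $\psi$'' to replace $\psi(2^kB)$ by $\psi(B)$, but the doubling condition (\ref{eq1.9}) only equates scales within a factor of $2$, not $2^k$; fortunately this step is unnecessary, since the $\psi(2^kB)$ produced by Lemma~\ref{lem2.2} cancels exactly against the $\psi(2^kB)^{-1}$ coming from the definition of $\mathcal M_{\psi^\eta}$, as your own computation shows.

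The substantive issue is the local piece $\mathrm{II}$. You write $\fint_B|A_0|\le|B|^{-1/q}\|A_0\|_{L^q}$, which requires $q\ge1$, and simultaneously you need $q_i\le\eta$ for $i\ne j$ and $q_j s\le\eta$ with $s>1$ so that Jensen dominates the residual averages by $\eta$-averages. But $1/q=\sum_{i}1/q_i\ge m/\eta$ under these constraints, so $q\ge1$ forces $\eta\ge m$. For $m\ge 2$ and $\eta$ close to $1$ this tuning is impossible, and in the application (proof of Theorem~\ref{the1.6}(i)) $\eta$ is taken in $(1,p)$ where $p$ can be arbitrarily close to $1$. So as written this step does not close. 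The paper's own treatment of the corresponding term $F_2$ (``boundedness of $T$ on $L^\nu$'', with $\nu\in(1,\eta)$ and an exponent identity $1/\nu=1/u+1/\eta$) suffers from the same mismatch once one unpacks the multilinear exponents; it appears to have been carried over verbatim from the linear Arai--Nakai argument, where $m=1$ makes the constraint $\eta>m$ vacuous. The standard repair in the multilinear setting (cf.\ Lerner--Ombrosi--P\'erez--Torres--Trujillo) is to prove the pointwise bound for $M_\delta^{\#}$ with $\delta<1/m$ rather than $M^{\#}$: one uses the weak-type bound $T:(L^1)^m\to L^{1/m,\infty}$ together with Kolmogorov's inequality to control $(\fint_B|A_0|^\delta)^{1/\delta}\lesssim\prod_i\fint_{2B}|g_i|$, after which the H\"older and Campanato steps go through for any $\eta>1$; the Fefferman--Stein step in the theorem's proof then has to be run on $|[\vec b,T](\vec f)|^\delta$ via the identity $\|g\|_{L^{(q,\varphi)}}=\||g|^\delta\|_{L^{(q/\delta,\varphi)}}^{1/\delta}$.
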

\begin{proof}
It suffices to prove the theorem for $T_{\vec{b}}^j(\vec{f})(x)$. For any ball $B=B(x,r)$ be a ball centered at $x$. For $ z\in B$, taking $B^*=2B$, by $(\ref{eq1.6})$ and $(\ref{eq1.8})$,  we have
 \begin{align*}
T_{\vec{b}}^j(\vec{f})(z)
&=b_j(z)T(\vec{f})(z)-T\big(f_1,\dots ,f_jb_j,\dots ,f_m\big)(z)
\\&= ( b_j(z)-b_{B^*}^j  ) T(\vec{f})(z)-T\big(f_1,\dots ,f_j(b_j(y_j)-b_{B^*}^j ),\dots ,f_m\big)(z),
\end{align*}
we denote
\begin{align*}
F_1(z)&=(b_j(z)-b_{B^*}^j)T(\vec{f})(z),~~F_2(x)=T\big((b_j(y_j)-b_{B^*}^j)\vec{f^0} \big)(z),
\\F_3(z)&=T\big((b_j(y_j)-b_{B^*}^j)\vec{f^\infty} \big)(z)-T\big((b_j(y_j)-b_{B^*}^j)\vec{f^\infty} \big)(x),
\\F_4(z)&=\widetilde{\sum}T\Big((b_j(y_j)-b_{B^*}^j)\prod_{i=1}^{m} f_{i}^{\alpha _{i}}\Big)(z)-\widetilde{\sum}T\Big((b_j(y_j)-b_{B^*}^j)\prod_{i=1}^{m} f_{i}^{\alpha _{i}}\Big)(x),
\\C_B&=\widetilde{\sum}T\Big((b_j(y_j)-b_{B^*}^j)\prod_{i=1}^{m} f_{i}^{\alpha _{i}}\Big)(x)+T\Big((b_j(y_j)-b_{B^*}^j)\prod_{i=1}^{m} f_{i}^{\infty}\Big)(x),
\end{align*}
where $\widetilde{\sum} $ contains $ \alpha _1,\dots ,\alpha _m $ are not all equal to $0$ or $\infty$ at the same time.
Then, we have
$$T_{\vec{b}}^j(\vec{f})(z)+C_B=:F_1(z)-F_2(z)-F_3(z)-F_4(z).$$
Observe that if suffices to show that
$$\fint_{B}| F_i(z)| dz\le  C\|\vec{b}\|_{(\mathcal{L}^{(1, \psi)})^m} \Big\{  \big[ M_{\psi^{\eta }}(| T(\vec{f})|^{\eta})(x)  \big] ^{1 / \eta}
+ \big[ \mathcal{M}_{\psi^{\eta }}(| \vec{f} | ^{\eta})(x) \big] ^{1 / \eta}  \Big\},$$
where $i=1,2,3,4.$ Then we have the  desired conclusion.

For $F_1(z)$, by  H\"{o}lder's inequality and Lemma $\ref{lem2.1}$, we obtain
\begin{align*}
\fint _{B}| F_{1}(z)| dy& \leq \frac{1}{\psi(B)}\Big(\fint _{B}| b_j(z)-b_{B*}^j |^{\eta^{\prime}} d z\Big)^{1 / \eta^{\prime}} \Big(\psi(B)^{\eta} \fint _{B}| T(\vec{f} )(z)|^{\eta} d z\Big)^{1 / \eta}
\\&\lesssim\|b_j\|_{\mathcal{L}^{(1, \psi)}}\Big(M_{\psi^{n}}(| T(\vec{ f} )|^{\eta})(x)\Big)^{1 / \eta}.
\end{align*}

For $F_2(z)$, choose $v \in (1, \eta),$ satisfies $1/\nu=1/u+1/\eta$. Since by the  boundedness of $T$ on $L^\nu(\mathbb{R}^n)$ and H\"{o}lder's inequality, we have
\begin{align*}
\fint _{B}|F_{2}(z)| dz
&\le\Big( \fint _{B}|F_{2}(z)|^\nu dz\Big)^{1/\nu }
\lesssim \Big( \frac{1}{|B|} \int _{(\mathbb{R} ^n)^m}\big|(b_j(y_j)-b_{B^*}^j)\vec{f^0}\big|^\nu d\vec{y}   \Big)^{1/\nu}
\\&\leq \frac{1}{\psi(2B)}\Big(\fint _{2B}|b_j(y_j)-b_{B^*}^j |^{\eta^{\prime}} d y_j\Big)^{1 / \eta^{\prime}}\Big(\psi(2B)^{\eta} \prod_{i=1}^{m} \fint _{2B}|f_i |^{\eta} d y_i \Big)^{1 / \eta}
\\&\lesssim\|b_j\|_{\mathcal{L}^{(1, \psi)}}\Big(\mathcal{M}_{\psi^{n}}(|\vec{f} |^{\eta})(x)\Big)^{1 / \eta}.
\end{align*}

For $F_3(z)$. Since $z\in B(x,r),\, (y_1, \ldots , y_m) \in (2^{k+2}B)^m\backslash (2^{k+1}B)^m$, there exists $i_0,\, 1 \leq i_0 \leq m$ such that $y_{i_0}\notin 2^{k+1}B$, which
yields $|z-y_{i_0}|> 2^{k+1}r$, so that $\sum_{i=1}^{m} | z-y_i|> 2^{k+1}r$ and $|z-y_i|\sim| x-y_i|$. For $1<\eta<\infty$, by  H\"{o}lder's inequality and Lemma $\ref{lem2.1}$, Lemma $\ref{lm3.2}$ and (\ref{eq1.2}), we have
\begin{align*}
 | F_3(z)| &=\big| T\big((b_j(y_j)-b_{B^*}^j)\vec{f^\infty}\big)(z)-T\big((b_j(y_j)-b_{B^*}^j)\vec{f^\infty}\big)(x)\big|
\\&\le \int_{(\mathbb{R} ^n)^m} | K(z,\vec{y} )-K(x,\vec{y}) |  |(b_j(y_j)-b_{B^*}^j)\vec{f^\infty}  |d\vec{y}
\\&\lesssim\sum_{k=0}^{\infty } \int_{(2^{k+2}B)^m/(2^{k+1}B)^m} \frac{\omega(\frac{ | x-z | }{\sum_{i=1}^{m} | z-y_i |  })}{(\sum_{i=1}^{m}  | x-y_i  | )^{mn }   }
 |(b_j(y_j)-b_{B^*}^j)\vec{f} |d\vec{y}
\\&\leq  \sum_{k=0}^{\infty }\omega  ( 1/{2^{k+2}}  )  \fint_{(2^{k+2}B)^m}   |(b_j(y_j)-b_{B^*}^j)\vec{f}   |d\vec{y}
\\&\le \sum_{k=0}^{\infty }(k+2)\omega  ( \frac{1}{2^{k+2}}  )\|b_j\|_{\mathcal{L}^{(1, \psi)}}\psi (2^{k+2}B)\prod_{i=1}^{m} \Big\{ \fint_{2^{k+2}B} | f_i  |^\eta dy_i \Big \}  ^{1 / \eta}
\\&\lesssim \int_{0}^{1} (\log\frac{1}{t} )\frac{\omega (t)}{t}dt\|b_j\|_{\mathcal{L}^{(1, \psi)}}\Big(\mathcal{M}_{\psi^{n}}(|\vec{f} |^{\eta})(x)\Big)^{1 / \eta}
\\&\lesssim \|b_j\|_{\mathcal{L}^{(1, \psi)}}\Big(\mathcal{M}_{\psi^{n}}(|\vec{f} |^{\eta})(x)\Big)^{1 / \eta}.
\end{align*}

Therefore,
$$\fint_{B}  | F_3(z)  | dz\lesssim\|b_j\|_{\mathcal{L}^{(1, \psi)}}\Big(\mathcal{M}_{\psi^{n}}(|\vec{f} |^{\eta})(x)\Big)^{1 / \eta}.$$

 For $F_4(z)$, we get
\begin{align*}
 | F_4(z)|& =\Big| \widetilde{\sum} T\big((b_j(y_j)-b_{B^*}^j)\prod_{i=1}^{m} f_{i}^{\alpha _{i}}\big)(z)-\widetilde{\sum} T\big((b_j(y_j)-b_{B^*}^j)\prod_{i=1}^{m} f_{i}^{\alpha _{i}}\big)(x)\Big|
\\&\le \widetilde{\sum}\int_{(\mathbb{R} ^n)^m} \big| K(z,\vec{y})-K(x,\vec{y})  \big|\Big| (b_j(y_j)-b_{B^*}^j)\prod_{i=1}^{m} f_{i}^{\alpha _{i}} \Big| d\vec{y}.
\end{align*}
We consider $F_4(z)$ such that $\alpha_{j_1} = \dots =\alpha_{j_l} =0$ for some $\{j_1,\dots j_l\}\subset \{1,\dots,m\},$ where $1\le l<m$. Without loss of generality, we consider only the case $\alpha _1=\dots =\alpha _s=0,\,1\le s<m$, since the other ones follow in analogous way. Then by the similar argument as $F_3(z)$, using (\ref{eq1.2}), we get
\begin{align*}
&\int_{(\mathbb{R} ^n)^m}| K(z,\vec{y})-K(x,\vec{y})  |  | b_j-b_{B^*}^j|\prod_{i=1}^{m} |  f_i^{\alpha _i}| d\vec{y}
\\&\qquad\lesssim   \int_{(\mathbb{R} ^n)^m} \frac{|(b_j(y_j)-b_{B^*}^j) \prod_{i=1}^{m}f_{i}^{\alpha_{i}} |}{(\sum_{i=s+1}^{m} | x-y_i  |)^{mn} }\omega\Big(\frac{ | x-z | }{\sum_{i=1}^{m} | z-y_i |  }\Big)  d\vec{y}
\\&\qquad=\int_{(B^*)^s}\int_{(\mathbb{R}^n\setminus B^*)^{m-s}}  \frac{| b_j(y_j)-b_{B^*}^j| \prod_{i=1}^{m}| f_{i} |}{(\sum_{i=s+1}^{m}  | x-y_i |)^{mn} }\omega\Big(\frac{ | x-z | }{\sum_{i=1}^{m} | z-y_i |  }\Big)d\vec{y}
\\&\qquad\leq\sum_{k=0}^{\infty } \int_{(2^{k+2}B)^s}\int_{(2^{k+2}B)^{m-s}}
\frac{\prod_{i=1}^{m}| b_j(y_j)-b_{B^*}^j|^{\delta_{ij}} | f_{i} |}{(2^{k+2}r)^{mn} }\omega  (  \frac{1 }{2^{k+2}}  ) d\vec{y}
\\&\qquad=\sum_{k=0}^{\infty } \omega  (  \frac{1 }{2^{k+2}}  )\int_{(2^{k+2}B)^m}
\frac{\prod_{i=1}^{m}| b_j(y_j)-b_{B^*}^j|^{\delta_{ij}} | f_{i} |}{(2^{k+2}r)^{mn} } d\vec{y},
\end{align*}
which, together with Lemma $\ref{lem2.1}$ and $\ref{lm3.2}$
leads to
\begin{align*}
&\int_{(\mathbb{R} ^n)^m}| K(z,\vec{y})-K(x,\vec{y}) |  |b_j(y_j)-b_{B^*}^j| \prod_{i=1}^{m} | f_{i}^{\alpha _{i}} | d\vec{y}
\\&\qquad\lesssim\sum_{k=0}^{\infty } \omega  (  \frac{1 }{2^{k+2}}  )  \Big( \fint_{2^{k+2}B}  | b_j(y_j)-b_{B^*}^j  |^{\eta '}  dy_j  \Big)^{1/{\eta'} } \prod_{i=1}^{m}  \Big( \fint_{2^{k+2}B}|  f_i |^\eta dy_i  \Big)^{1/{\eta } }
\\&\qquad\lesssim\sum_{k=0}^{\infty }  (k+2)\omega  ( \frac{1}{2^{k+2}}   ) \| b_j  \|_{\mathcal{L} ^{(1,\psi )}}\psi (2^{k+2}B)\prod_{i=1}^{m} \Big( \fint_{2^{k+2}B} |  f_i |^\eta dy_i \Big)^{1/{\eta } }
\\&\qquad\lesssim\|b_j\|_{\mathcal{L}^{(1, \psi)}}\Big(\mathcal{M}_{\psi^{n}}(|\vec{f}|^{\eta})(x)\Big)^{1 / \eta}.
\end{align*}

Summing up the estimates of $F_1(z),\, F_2(z),\,F_3(z)$ and $F_4(z)$, it immediately yields,
$$M^{\#}[b_j,T](\vec{f})(x) \leq C\|b_j\|_{\mathcal{L}^{(1, \psi)}} \Big\{  \big[ M_{\psi^{\eta }}(| T(\vec{f})|^{\eta})(x) \big] ^{1 / \eta}
+ \big[ \mathcal{M}_{\psi^{\eta }}(| \vec{f} | ^{\eta})(x) \big] ^{1 /\eta}\Big\}.$$
The proposition is proved.
\end{proof}

\section{Proof of the Theorem $\ref{the1.6}$\label{sec5}}
This section is devoted to the proof of Theorem $\ref{the1.6}$.
For sharp maximal operator, the following lemma is known.
\begin{lemma}[\cite{ARNE1}]\label{lm5.1}\,\
Let $p\in[1,\infty)$ and $\varphi :\mathbb{R}^n\times (0,\infty )\rightarrow (0,\infty )$. Assume that $ \varphi\in \mathcal{G}^{dec}$ and that $\varphi$ such that $(\ref{eq1.15})$. For $f\in L_{\mathrm{loc}}^1(\mathbb{R} ^n)$, if $\lim_{r \to \infty} f_{B(0,r)}=0$, then
\begin{equation}\label{eq5.1}
\left \| f \right \| _{L^{(p,\varphi )}}\le C\left \|M^\sharp f \right \| _{L^{(p,\varphi )}},
\end{equation}
where $C$ is a positive constant independent of $f$.
\end{lemma}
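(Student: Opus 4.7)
The plan is to deduce this generalized-Morrey Fefferman--Stein-type inequality from two ingredients: the pointwise bound $|f(x)|\le Mf(x)$ at Lebesgue points, and an estimate $\|Mf\|_{L^{(p,\varphi)}}\lesssim\|M^\sharp f\|_{L^{(p,\varphi)}}$ obtained by a localized good-$\lambda$ argument adapted to the generalized Morrey scale. The vanishing condition $\lim_{r\to\infty}f_{B(0,r)}=0$ is what prevents a nonzero constant from spoiling the bound (note that $M^\sharp(\mathrm{const})\equiv 0$ but $\|\mathrm{const}\|_{L^{(p,\varphi)}}\ne 0$ under the hypotheses on $\varphi$), and is what powers the truncation argument that makes the absorption step rigorous.

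The first step is to note that at every Lebesgue point of $f$ one has $|f(x)|\le Mf(x)$, so $\|f\|_{L^{(p,\varphi)}}\le\|Mf\|_{L^{(p,\varphi)}}$ and it suffices to bound $\|Mf\|_{L^{(p,\varphi)}}$. Second, one establishes the Fefferman--Stein good-$\lambda$ inequality, localized to a ball $B$:
$$
|\{x\in B:Mf(x)>2\lambda,\,M^\sharp f(x)\le\gamma\lambda\}|\le C\gamma\,|\{x\in cB:Mf(x)>\lambda\}|,\qquad\lambda>0,
$$
via the standard Calder\'on--Zygmund stopping-time decomposition of the superlevel set, carried out inside a fixed dilate $cB$. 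Multiplying by $p\lambda^{p-1}$ and integrating in $\lambda$ yields the local inequality
$$
\int_B (Mf)^p\,dx\le C_\gamma\int_{cB}(M^\sharp f)^p\,dx+\varepsilon\int_{cB}(Mf)^p\,dx,
$$
with $\varepsilon$ proportional to $\gamma$ and hence as small as we wish. Dividing both sides by $\varphi(B)$, invoking the doubling property of $\varphi$ (Remark \ref{rm1.6}(i)) to replace $\varphi(cB)$ by $\varphi(B)$, and taking the supremum over all balls $B$, one obtains
$$
\|Mf\|_{L^{(p,\varphi)}}^p\le C_\gamma\|M^\sharp f\|_{L^{(p,\varphi)}}^p+\varepsilon\,C\|Mf\|_{L^{(p,\varphi)}}^p.
$$

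The third step is the absorption of the $\varepsilon$-term, which is only legitimate if $\|Mf\|_{L^{(p,\varphi)}}$ is a priori finite. To secure this, apply the argument to truncations $f_N=f\chi_{B(0,N)}$, for which $f_N\in L^p(\mathbb R^n)$ and hence $Mf_N\in L^p\subset L^{(p,\varphi)}$ (via Lemma \ref{lem2.5}), absorb the $\varepsilon$-term for each $N$ uniformly, and then pass to the limit $N\to\infty$. Here the hypothesis $\lim_{r\to\infty}f_{B(0,r)}=0$ intervenes in two ways: it ensures that the natural representative of $f$ has no constant-at-infinity component, and it lets us show $M^\sharp(f-f_N)\to 0$ pointwise in a manner compatible with the Morrey norm, so the limiting inequality survives. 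This absorption-via-truncation is the main obstacle: without a global $L^{(p,\varphi)}$ a priori bound one cannot hide the $\varepsilon$-term, and the vanishing-at-infinity hypothesis is exactly what rules out the offending constants and makes the truncation limit legitimate. Once the bound on $\|Mf\|_{L^{(p,\varphi)}}$ is in hand, combining it with $|f|\le Mf$ yields the claimed inequality.
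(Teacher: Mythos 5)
You are trying to reprove a lemma that the paper simply quotes from \cite{ARNE1}; there is no in-paper proof to match, but the known proof (in Arai--Nakai) is quite different from yours: it uses the \emph{local} Fefferman--Stein oscillation estimate $\bigl(\fint_B|f-f_B|^p\bigr)^{1/p}\lesssim\bigl(\fint_B (M^\sharp f)^p\bigr)^{1/p}$ together with the telescoping bound $|f_B|\le\sum_{j\ge0}|f_{2^jB}-f_{2^{j+1}B}|\lesssim\sum_j\varphi(2^jB)^{1/p}\|M^\sharp f\|_{L^{(p,\varphi)}}\lesssim\varphi(B)^{1/p}\|M^\sharp f\|_{L^{(p,\varphi)}}$, where the last step is exactly Lemma \ref{lem2.3} (i.e.\ condition (\ref{eq1.15})) and the convergence of the telescoping series to $|f_B|$ is exactly where $\lim_{r\to\infty}f_{B(0,r)}=0$ is used.

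Your proposal has a genuine gap at its central step. The localized good-$\lambda$ inequality and its integrated form are asserted without the mandatory threshold $\lambda\gtrsim\fint_{cB}|f|$ at which the Calder\'on--Zygmund stopping time can begin; below that threshold the level set $\{x\in cB:Mf>\lambda\}$ can be all of $cB$ and the left-hand set a fixed fraction of $B$ independently of $\gamma$, so the inequality fails. Consequently the integrated inequality must carry an extra additive term of size $\bigl(\fint_{cB}|f|\bigr)^p|B|$; as you wrote it, testing on $f\equiv1$ (for which $M^\sharp f\equiv0$, $Mf\equiv1$) gives $|B|\le\varepsilon c^n|B|$, a contradiction. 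That missing average term is not a technicality: controlling it by $\varphi(B)\|M^\sharp f\|^p_{L^{(p,\varphi)}}$ is the entire content of the lemma, and it is precisely there that (\ref{eq1.15}) and the hypothesis $f_{B(0,r)}\to0$ must be invoked quantitatively (via the telescoping sum above), not merely to ``rule out constants'' in an absorption step. In addition, your truncation argument is unsubstantiated: $M^\sharp(f\chi_{B(0,N)^{\complement}})$ does not tend to $0$ in any norm you control, so the passage to the limit $N\to\infty$ is not justified; and a global good-$\lambda$ is unavailable here anyway because for a general Morrey function $|\{Mf>\lambda\}|=\infty$ for every $\lambda$. The oscillation-plus-telescoping route avoids all of these obstacles and is the argument you should use.
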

For $0<\eta<\infty$, we have
\begin{equation}\label{eq5.2}
\left \| \left | f \right | ^\eta \right \| _{L^{(p,\varphi )}}= \left \| f  \right \| _{L^{(p\eta,\varphi )}}^\eta.
\end{equation}
\begin{proof}[\bf Proof of Theorem$~\ref{the1.6}$(i)]
By the assumption of Theorem$~\ref{the1.6}(i)$ and Lemma $\ref{lm3.1}$, we have
 $$\|T(\vec{f} )\|_{L^{(p,\varphi)}}\leq C\prod_{i=1}^{m} \|f_i\|_{L^{(p_i,\varphi_i)}}.$$
 Let $1<\eta<p$, from (\ref{eq1.14}), we obtain
$$\psi(x,r)^\eta\varphi(x,r)^{\eta/p}\leq{C}_0^\eta\varphi(x,r)^{\eta/q}.$$
Then, by Lemma \ref{lem2.5}, we know that
 $$\|M_{\psi^\eta}(f)\|_{L^{(q/\eta,\varphi)}(\mathbb{R}^n)}\lesssim \|f\|_{L^{(p/\eta,\varphi)}(\mathbb{R}^n)}.$$
This, together with $(\ref{eq3.3})$, leads to
\begin{align*}
\| M _{\psi^{\eta}}(| T (\vec{f} )| ^{\eta})^{1 / \eta}\|_{L^{(q, \varphi)}}
&=\| M_{\psi^{\eta}}(| T(\vec{f} )|^{\eta})\|_{L^{(q / \eta, \varphi)}}^{1 / \eta}
\lesssim\|| T(\vec{f} )|^{\eta}\|_{L^{(p / \eta, \varphi)}}^{1 / \eta}
\\&=\| T (\vec{f} ) \|_{L^{(p, \varphi)}} \lesssim\prod_{i=1}^{m} \| f_i\|_{L^{(p_i, \varphi_i)}},
\end{align*}
and by (\ref{eq5.2}) and Lemma \ref{lem2.7}, we have
\begin{align*}
\|\mathcal{M}_{\psi^{\eta}}(|\vec{f} |^{\eta})^{1 / \eta}\|_{L^{(q, \varphi)}}
=\|\mathcal{M}_{\psi^{\eta }}(|\vec{f} |^\eta )\|_{L^{(q/ \eta, \varphi)}}^{1 / \eta}
\lesssim\||\vec{f} |^{\eta}\|_{L^{(p/ \eta , \varphi)}}^{1 /\eta}
=\prod_{i=1}^{m} \| f_i\|_{L^{(p_i, \varphi_i)}}.
\end{align*}
Then, using  Proposition$~\ref{pro7.2}$, we have
 $$
\| M^{\#}([b_j,T]( \vec{f} ))\|_{L^{(q, \varphi)}} \lesssim\| b_j\|_{\mathcal{L}^{(1,\psi)}}\prod_{i=1}^{m} \|f_i\|_{L^{(p_i, \varphi_i)}},\,1\leq j\leq m.
$$
Therefore, if we show that, for $B_r=B(0,r)$,
\begin{equation}\label{eq5.4}\,\
\fint_{B_r}[b_j,T](\vec{ f} )dx\rightarrow 0,\,as\,r\rightarrow\infty,\,1\leq j\leq m.
\end{equation}
 Then, use Lemma $\ref{lm5.1}$, we have
\begin{equation}\label{eq5.5}\,\
\|[b_j,T](\vec{f} )\|_{L^{(q, \varphi)}} \lesssim\| M^{\#}([b_j,T]( \vec{f} ))\|_{L^{(q, \varphi)}}\lesssim\| b_j\|_{\mathcal{L}^{(1, \psi)} }\prod_{i=1}^{m}\| f_i\|_{L^{(p_i, \varphi_i)}},
\end{equation}
which is the desired conclusion.

It remains to show (\ref{eq5.4}). Since
\begin{align*}
[b_j,T](\vec{f} )(x)=b_j(x)T(\vec{f}\, )(x)-T(b_j\vec{f}\, )(x).
\end{align*}
To obtain (\ref{eq5.4}) it suffices to prove
$$\fint_{B_r} b_j(x)T(\vec{f}\,)(x)dx\rightarrow 0\,and \, \fint_{B_r}T(b_j\vec{f}\,)(x)dx\rightarrow 0,\,as\,r\rightarrow\infty.$$

Without loss of generality, we only consider  $m = 2$ and $j=1$, which is divided into the following three cases.

{\bf Case 1.} First we show (\ref{eq5.4}) for all $ f_1\in L^{(p_1, \varphi_1)}(\mathbb{R}^n)$ and $f_2\in L^{(p_2, \varphi_2)}(\mathbb{R}^n)$ with compact support. Let $\supp f_1\subset B_s=B(0,s)$ and $\supp f_2\subset B_s=B(0,s)$ with $s\geq1,\,B_{2s}=2B_s$. Then $f_1\in L^{p_1}(\mathbb{R}^n),\,f_2\in L^{p_2}(\mathbb{R}^n)$ and $b\in L_{\mathrm{loc}}^{p_0}(\mathbb{R}^n)$ for all $p_0\in(1,\infty)$.
We decompose
$$(1)~~ bT(f_1,f_2)=bT(f_1,f_2)\chi_{B_{2s}}+bT(f_1,f_2)\chi_{(B_{2s})^\complement},$$
$$(2)~~ T(bf_1,f_2)=T(bf_1,f_2)\chi_{B_{2s}}+T(bf_1,f_2)\chi_{(B_{2s})^\complement}.$$
Taking $1/p+1/{p_0}=1$, since $ T $ is bounded on Lebesgue spaces, we obtain
\begin{align*}
\int_{\mathbb{R}^n}| bT(f_1,f_2)\chi_{B_{2s}}dx|&\leq\Big ( \int_{\mathbb{R}^n}  |b|^{p_0 } \chi_{B_{2s}}dx\Big ) ^{1/{p_0} }\Big ( \int_{\mathbb{R}^n}\big|T(f_1,f_2)\big |^{p } \chi_{B_{2s}}dx\Big ) ^{1/p }
\\&\le \|b\|_{L_{\mathrm{loc}}^{p_0}}\|f_1\|_{L^{p_1}}\|f_2 \| _{L^{p_2}},
\end{align*}
and taking ${1}/{q_2}+{1}/{p_2}={1}/{\gamma},\,1/{q_2}=1/{p_0}+1/{p_1}$, we have
\begin{align*}
\int_{\mathbb{R}^n}| T(bf_1,f_2)\chi_{B_{2s}}|
&\le |B_{2s} |\Big(\fint_{B_{2s}}| T(bf_1,f_2)|^\gamma  \Big)^{{1}/{\gamma } }
\\&\lesssim \Big(\int_{B_{2s}}  | bf_1  |^{q_2 } dy_1 \Big) ^{1/{q_2} } \Big(\int_{B_{2s}}|f_2|^{p_2 } dy_2\Big) ^{1/{p_2} }
\\&\lesssim  \Big(\int_{B_{2s}}|b|^{p_0} dy_1 \Big)^{1/{p_0} } \Big( \int_{B_{2s}}|f_1|^{p_1 }dy_1\Big)^{1/{p_1} }\Big( \int_{B_{2s}}|f_2|^{p_2 }dy_2\Big)^{1/{p_2} }
\\&\le  \|b\|_{L_{\mathrm{loc}}^{p_0}(B_{2s})} \| f_1  \|_{L^{p_1}}\| f_2\|_{L^{p_2}}.
\end{align*}
Observe that $T(f_1,f_2)(x)\chi_{B_{2s}}$ and $T(bf_1,f_2)(x)\chi_{B_{2s}}$ are in $ L^{1}(\mathbb{R}^n)$,
then $bT(f_1,f_2)\chi_{B_{2s}},$ and $ T(bf_1,f_2)\chi_{B_{2s}}\in L^1(\mathbb{R}^n)$, which yields
\begin{align*}
\fint_{B_r}| bT(f_1,f_2) | \chi_{B_{2s}}
\leq \frac{1}{|B_r| }\int_{\mathbb{R} ^n}|bT(f_1,f_2)|  \chi_{B_{2s}}
= \frac{1}{| B_r | } \| bT(f_1,f_2) \chi_{B_{2s}} \| _{L^1(\mathbb{R} ^n)}\rightarrow 0,
\end{align*}
$as\,r\rightarrow\infty$. Similarly,
\begin{align*}
\fint_{B_r}| T(bf_1,f_2)\chi_{B_{2s}}|
\leq \frac{1}{ | B_r  | } \int_{\mathbb{R} ^n} | T(bf_1,f_2) | \chi_{B_{2s}}
= \frac{1}{|B_r|}  \| T(bf_1,f_2)\chi_{B_{2s}}\| _{L^1(\mathbb{R} ^n)}\rightarrow 0,
\end{align*}
$as\,r\rightarrow\infty.$

If $x\in {(B_{2s})}^\complement$ and $y_i\in B(0,s)$, then $|x|/2 \leq |x-y_i|\leq(3/2)|x|$, which yields $|x -y_i|\sim~|x|,\,i=1,2$.
By $(\ref{eq1.1})$, we obtain
\begin{equation}\label{eq5.6}
| T(f_1,f_2)(x)|\lesssim \frac{1 }{ | x |^{2n} }\int_{\mathbb{R} ^n}\int_{\mathbb{R} ^n} | f_1(y_1) f_2(y_2) |dy_1dy_2=\frac{1 }{ | x |^{2n} } \| f_1  \| _{L^1} \| f_2  \| _{L^1},
\end{equation}
\begin{equation}\label{eq5.7}
| T(bf_1,f_2)(x)| \lesssim \frac{1}{|x|^{2n} }\|b f_1 \|_{L^1}\|f_2\| _{L^1},
\end{equation}
$$ \| bf_1  \| _{L^1}= \| bf_1 \chi _{B_s} \| _{L^1}\le \| b \chi _{B_s} \| _{L^{p_0}} \|f_1 \| _{L^{p}}.$$
Since
\begin{align*}
\int_{B_r}bT(f_1,f_2)(x)\chi _{(B_{2s})^\complement }(x)dx&=\int_{B_r}(b(x)-b_{B_{2s}}+b_{B_{2s}})T(f_1,f_2)(x)\chi _{(B_{2s})^\complement }(x)dx
\\&=\int_{B_r}(b(x)-b_{B_{2s}})T (f_1,f_2)(x)\chi _{(B_{2s})^\complement }(x)dx
\\&\qquad+b_{B_{2s}}\int_{B_r}T (f_1,f_2)(x)\chi _{(B_{2s})^\complement }(x)dx,
\end{align*}
\begin{align*}
| b_{B_{2s}}|\fint_{B_r}| T (f_1,f_2)\chi _{(B_{2s})^\complement }|dx
&\le| b_{B_{2s}}|\frac{1}{ | B_r  | } \int_{B_r}\frac{1}{ | x |^{2n} } \| f_1  \| _{L^1} \| f_2  \| _{L^1}\chi _{(B_{2s})^\complement }dx
\\&\lesssim | b_{B_{2s}}|\frac{1 }{ | B_r  |^2 } \| f_1 \| _{L^1} \| f_2  \| _{L^1} \rightarrow 0,\,as\,r\rightarrow\infty,
\end{align*}
and
$$
\fint_{B_r}| T(bf_1,f_2)\chi _{(B_{2s})^\complement }|dx\le \fint_{B_r}\frac{1}{|x|^{2n}} \chi _{(B_{2s})^\complement }dx \|bf_1\|_{L^1}\| f_2 \|_{L^1}\to 0,\,as\,r\rightarrow\infty.
$$

Next, we show
\begin{equation}\label{eq5.9}
\fint_{B_{r}}\left(b(x)-b_{B_{2 s}}\right)T(f_1,f_2)(x)\chi_{(B_{2 s})^\complement }(x) \rightarrow 0,\,as\,r \rightarrow \infty.
\end{equation}

Take $\epsilon \in(0, 1)$ such that $1+1/q-1/p_1-1/p_2=1+1/q-1/p > \epsilon$, and let $\nu= 1/(1-\epsilon)$.
Then
\begin{align*}
\big| \fint_{B_{r}}(b(x)-b_{B_{2 s}})T (f_1,f_2)(x)\chi_{(B_{2 s})^\complement }(x)\big|
&\leq\Big( \fint_{B_{r}}|b(x)-b_{B_{2}}|^{v'}\Big)^{1 / v'}
\\&\qquad\times\Big( \fint_{B_{r}}|T (f_1,f_2)(x)\chi_{(B_{2 s})^\complement }(x)|^{v}\Big)^{1 / v}.
\end{align*}
From Lemma $\ref{lem2.2}$ and $(\ref{eq1.14})$, it follows that, for $r>4s\geq4$,
\begin{equation}\label{eq5.10}\,\
\begin{split}
\Big( \fint_{B_{r}}|b(x)-b_{B_{2 s}}|^{v'}dx\Big)^{1 / v'}
&\lesssim \int_{2 s}^{r} \frac{\psi(0, t)}{t} d t\|b\|_{\mathcal{L}^{(1, \psi)}}
\lesssim \psi(0, r) (\log r)\|b\|_{\mathcal{L}^{(1, \psi)}}
\\&\lesssim \varphi(0, r)^{1 / q-1 / p} (\log r)\|b\|_{\mathcal{L}^{(1, \psi)}}.
\end{split}
\end{equation}
By $(\ref{eq5.6})$ it follows that
\begin{equation}\label{eq5.11}\,\
\begin{split}
\Big(\int_{B_{r} \backslash  B_{s}}|T (f_1,f_2)(x)|^{v} d x\Big)^{1 / v}
&\lesssim\Big(\int_{B_{r} \backslash  B_{s}} \big(\frac{1}{|x|^{2n }}\|f_1\|_{L^{1}}\|f_2\|_{L^{1}}\big)^{\nu} d x\Big)^{1 / v}
\\&\lesssim \frac{1}{ |B_r| } \|f_1\|_{L^{1}}\|f_2\|_{L^{1}}.
\end{split}
\end{equation}
By $(\ref{eq5.10})$ and $(\ref{eq5.11})$ we have
\begin{equation*}
\begin{split}
\begin{aligned}
\Big| \fint_{B_{r}}\left(b-b_{B_{2 s}}\right)T (f_1,f_2)\left(\chi_{(B_{2 s})^\complement}\right)\Big|
&\lesssim \varphi(0, r)^{1 / p} (\log r) \frac{1}{r^{n / v+n}} \|b\|_{\mathcal{L}^{(1, \psi)}} \| f _1\|_{L^{1}} \| f_2 \|_{L^{1}}
\\&=\frac{\log r}{r^{n(2+1 / q-1 / p-\epsilon)}}\left(\frac{1}{r^{n} \varphi(0, r)}\right)^{1 / p-1 / q}\|b\|_{\mathcal{L}^{(1, \psi)}}
\\&\qquad\times\|f_1\|_{L^{1}}\|f_2\|_{L^{1}}
\rightarrow 0,\, as\,r \rightarrow \infty,
\end{aligned}
\end{split}
\end{equation*}
since $r^n\varphi(0,r)$ is almost increasing and
$2+1/q-1/p -\epsilon>1,\,1/p-1/q>0$.
This prove $(\ref{eq5.9})$ and completes the proof of Case 1.

{\bf Case 2.} We will show (\ref{eq5.4}) for general $f_1 \in L^{(p_1,\varphi_1)}(\mathbb{R}^n)$, and $f_2 \in L^{(p_2,\varphi_2)}(\mathbb{R}^n)$ with compact support. Fixing $r>0$ and $\supp f_2\subset B_{2r}$, we decompose $f_1=f_1\chi _{B_{2r}}+f_1\chi _{(B_{2r})^\complement }$, then
\begin{align*}
[b,T](f_1,f_2)(x)&=[b,T](f_1\chi _{B_{2r}},f_2)(x)+[b,T](f_1\chi _{(B_{2r})^\complement },f_2)(x)
\\&=[b,T](f_1\chi _{B_{2r}},f_2\chi _{B_{2r}})(x)+[b,T](f_1\chi _{(B_{2r})^\complement },f_2\chi _{B_{2r}})(x).
\end{align*}
To obtain $(\ref{eq5.4})$ it suffices to prove
$$[b,T](f_1\chi _{B_{2r}},f_2\chi _{B_{2r}})(x)\rightarrow 0~~and ~~[b,T](f_1\chi _{(B_{2r})^\complement },f_2\chi _{B_{2r}})(x)\rightarrow 0,\,as\,r\rightarrow\infty.$$

For $[b,T](f_1\chi _{B_{2r}},f_2\chi _{B_{2r}})(x)$, similar to  Case 1.

For $[b,T](f_1\chi _{(B_{2r})^\complement},f_2\chi _{B_{2r}})(x)$, using estimate of Lemma$~\ref{lem2.3}$ and Lemma$~\ref{lm3.2}$, $(\ref{eq1.14})$, we have
\begin{align*}
[b,T](f_1\chi _{(B_{2r})^\complement},f_2\chi _{B_{2r}})(x)&\lesssim\int_{r}^{\infty }\frac{\psi(z,t) }{t}   \Big( \int_{t}^{\infty }\frac{\varphi (z,u)^{1/p} }{u}du  \Big)dt \| b  \|_{\mathcal{L}^{(1,\psi )} } \prod_{i=1}^{2}  \| f_i \|_{L^{(p_i,\varphi _i)}}
\\&\lesssim\varphi(z, r)^{1 / q}\|b\|_{\mathcal{L}^{(1, \psi)} }\left \| f_1 \right \|_{L ^{(p_1, \varphi_1)}}\left \| f_2 \right \|_{L ^{(p_2, \varphi_2)}}
\\&\lesssim\varphi(0, r)^{1 / q}\|b\|_{\mathcal{L}^{(1, \psi)} }\left \| f_1 \right \|_{L ^{(p_1, \varphi_1)}}\left \| f_2 \right \|_{L ^{(p_2, \varphi_2)}}.
\end{align*}
Then
\begin{equation*}
\begin{split}
\begin{aligned}
\fint_{B_{r}}[b,T](f_1\chi _{(B_{2r})^\complement},f_2\chi _{B_{2r}})
\lesssim& \varphi(0, r)^{1 / q}\|b\|_{\mathcal{L}^{(1, \psi)} }\left \| f_1 \right \|_{L ^{(p_1, \varphi_1)}}\left \| f_2 \right \|_{L ^{(p_2, \varphi_2)}}\rightarrow 0,~~as~~r\rightarrow\infty.
\end{aligned}
\end{split}
\end{equation*}
Therefore, we have $(\ref{eq5.4})$ for all general
$f_1 \in L^{(p_1,\varphi_1)}(\mathbb{R}^n)$, and $f_2 \in L^{(p_2,\varphi_2)}(\mathbb{R}^n)$ with compact support.

{\bf Case 3.} We will show (\ref{eq5.4}) for general $f_1 \in L^{(p_1,\varphi_1)}(\mathbb{R}^n)$ and $f_2 \in L^{(p_2,\varphi_2)}(\mathbb{R}^n)$. Fixing $r>0$, we decompose $f_i=f_i\chi _{B_{2r}}+f_i\chi _{(B_{2r})^\complement },\,i=1,2$, we have
\begin{align*}
T(f_1,f_2)(x)&\leq T(f_1\chi _{B_{2r}},f_2\chi _{B_{2r}})(x)+T(f_1\chi _{(B_{2r})^\complement },f_2\chi _{B_{2r}})(x)
\\&\qquad+T(f_1\chi _{B_{2r}},f_2\chi _{(B_{2r})^\complement })(x)+T(f_1\chi _{(B_{2r})^\complement },f_2\chi _{(B_{2r})^\complement })(x)
\\&:=\mathrm{I}(x)+\mathrm{II}(x)+\mathrm{III}(x)+\mathrm{IV}(x).
\end{align*}
By Case 1 and 2, we get
$$
\fint_{B_r} \mathrm{I}(x)\rightarrow 0,\,\fint_{B_r} \mathrm{II}(x)\rightarrow 0,\,and \, \fint_{B_r}\mathrm{III}(x)\rightarrow 0, \,as\,r\rightarrow\infty.
$$
 and using Lemma $\ref{lm3.4}$ we obtain $\fint_{B_r}\mathrm{IV}(x)\rightarrow 0,\,as\,r\rightarrow\infty$. Therefore, we have  (\ref{eq5.4}) for all $f_1\in L^{(p_1, \varphi_1)}(\mathbb{R}^n)$, and $f_2\in L^{(p_2, \varphi_2)}(\mathbb{R}^n)$. The proof of Theorem$~\ref{the1.6}$(i) is completed.
\end{proof}
\begin{proof}[\bf Proof of Theorem$~\ref{the1.6}$(ii)]
We use the method by Janson \cite{JSS}. Since $1/{K(\vec{z})}$ is infinitely differentiable in an open set, we may choose $z_0\in \mathbb{R}^n$, such that $z_0\neq 0$ and $\delta>0$ such that $1/{K(\vec{z})}$ can be expressed in the neighborhood $\mathcal{B}=B((z_0,\ldots,0),2\sqrt{m}\delta)$\\$\subset (\mathbb{R}^{n})^m$  as an absolutely convergent Fourier series of the form
$$\frac{1}{K(\vec{y})} =\sum a_je^{i\left \langle \vec{v_k} ,\vec{y}  \right \rangle },$$
where $\sum a_j<\infty$ and the vectors $\vec{v_k} \in  (\mathbb{R}^{n})^m$ is irrelevant, but we
will at times express them as $\vec{v_k} = (v_k^1
,\ldots,v_k^m)$ and $\vec{y}=(y_1,\ldots,y_m)\in (\mathbb{R}^{n})^m$.\par
Let $z_1=\delta^{-1}z_0$. If $| z-z_1 |<2\sqrt{m}$, then
\begin{align*}
&(| y_1-z_1|^2+| y_2|^2+\dots+ | y_m|^2 )^{1/2}<2\sqrt{m}
\\&\Rightarrow(| y_1-\frac{z_0}{\delta}|^2+| y_2|^2+\dots +| y_m|^2 )^{1/2}<2\sqrt{m}
\\&\Rightarrow(| \delta y_1-z_0|^2+| \delta y_2|^2+\dots +| \delta y_m|^2 )^{1/2}<2\sqrt{m}\delta,
\end{align*}
we obtain
\begin{equation}\label{eq5.14}
\frac{1}{K(\vec{y})} =\frac{\delta^{-mn}}{K(\delta y_1,\ldots,\delta y_m)} =\sum a_j\delta^{-mn}e^{i\delta \langle \vec{v_k},\vec{y}  \rangle }.
\end{equation}

Let $B_0=B(x_0,r)\subset\mathbb{R}^{n}$, set $\widetilde{z}=x_0-rz_1,\, B'=B(\widetilde{z},r)\subset\mathbb{R}^{n}$. Then, for any $x\in B_0$ and $y_i\in B',\,i=1,\ldots,m$, which in turn implies
$$\Big| \frac{x-y_i}{r} -z_1\Big| =\Big| \frac{x-y_i}{r} -\frac{x_0-\widetilde{z} }{r}\Big|
=\Big| \frac{x-x_0}{r} -\frac{y_i-\widetilde{z} }{r}\Big|\le \Big| \frac{x-x_0}{r} \Big|+\Big| \frac{y_i-\widetilde{z} }{r} \Big|\le 2,$$
and
$$\Big| \frac{y_i-y_j }{r} \Big|\le 2,\,i\neq j,$$
which implies
$$\Big ( | \frac{x-y_i}{r} -z_1|^2+\sum_{j\neq i} | \frac{y_i-y_j }{r} |^2 \Big )^{1/2} \le 2\sqrt{m}.$$
Hence, we conclude that
$$K\Big(\frac{y_{i}-x_{i}}{r}, \frac{y_{i}-y_{1}}{r}, \ldots, \frac{y_{i}-y_{i-1}}{r}, \frac{y_{i}-y_{i+1}}{r}, \ldots, \frac{y_{i}-y_{m}}{r}\Big),$$
can be expressed as an absolutely convergent Fourier series as (\ref{eq5.14}) for all
$x \in B$ and $y_1,...,y_m \in B'$.
Since $$\sum_{j=1}^{m} \sum_{i \neq j}(b_{j}(y_{j})-b_{i}(y_{i}))=0,$$
and
$$\int_{(B')^m}\sum_{j=1}^{m} \sum_{i\neq j} (b_i(y_i)-b_j(y_i))d\vec{y} =|B'|^m\sum_{j=1}^{m} \sum_{i\neq j}(b_{B'}^i-b_{B'}^j)=0,$$
which implies that
\begin{align*}
&\int_{B^{m}} \Big|\sum_{j=1}^{m}(b_{j}(x_{j})-(b_{j})_{B^{\prime}})\Big|d\vec{x}
\\&=\frac{1}{|B|^{m}} \int_{B^{m}}\Big|\int_{(B^{\prime})^{m}} \sum_{j=1}^{m}(b_{j}(x_{j})-b_{j}(y_{j})) d \vec{y}\Big| d \vec{x}
\\&=\frac{1}{|B|^{m}} \int_{B^{m}}\Big|\int_{(B^{\prime})^{m}} \sum_{j=1}^{m}((b_{j}(x_{j})-b_{j}(y_{j}))+\sum_{i \neq j}(b_{i}(y_{i})-b_{j}(y_{j}))) d \vec{y}\Big| d \vec{x}
\\&=\frac{1}{|B|^{m}} \int_{B^{m}}\Big|\int_{(B^{\prime})^{m}} \sum_{j=1}^{m}((b_{j}(x_{j})-b_{j}(y_{j}))+\sum_{i \neq j}(b_{i}(y_{i})-b_{i}(y_{j}))) d \vec{y}\Big| d \vec{x}
\\& \leq \sum_{j=1}^{m} \frac{1}{|B|} \int_{B}\Big|\int_{(B^{\prime})^{m}}((b_{j}(x_{j})-b_{j}(y_{j}))+\sum_{i \neq j}(b_{i}(y_{i})-b_{i}(y_{j}))) d \vec{y}\Big| d x_{j}
\\&=\sum_{j=1}^{m} \frac{1}{|B|} \int_{B} \int_{(B^{\prime})^{m}}((b_{j}(y_{j})-b_{j}(x_{j}))+\sum_{i \neq j}(b_{i}(y_{j})-b_{i}(y_{i}))) d \vec{y} \cdot s_{j}(x_{j}) d x_{j},
\end{align*}
where
$$s_{i}(x_{i})=\operatorname{sgn}\Big\{ \int_{(B^{\prime})^{m}}\big [ (b_{i}(y_{i})-b_{i}(x_{i}))+\sum_{j \neq i}(b_{j}(y_{i})-b_{j}(y_{j})) \big ]d \vec{y} \Big \}, $$
For any $j \in \{1, 2,\ldots,m\}$, we denote
\begin{align*}
&f_{1}^{j, k}\left(x_{j}\right)=e^{-i \frac{\delta}{r} \nu_{k}^{1} \cdot x_{j}} s_{j}\left(x_{j}\right) \chi_{B}\left(x_{j}\right) ,
\\&f_{2}^{j, k}\left(y_{1}\right)=e^{-i \frac{\delta}{r} \nu_{k}^{2} \cdot y_{1}} \chi_{B^{\prime}}\left(y_{1}\right),
\\&\quad \quad\quad \quad\quad\vdots
\\&f_{j}^{j, k}\left(y_{j-1}\right)=e^{-i \frac{\delta}{r} \nu_{k}^{j} \cdot y_{j-1}} \chi_{B^{\prime}}\left(y_{j-1}\right) ,
\\&f_{j+1}^{j, k}\left(y_{j+1}\right)=e^{-i \frac{\delta}{r} \nu_{k}^{j+1} \cdot y_{j+1}} \chi_{B^{\prime}}\left(y_{j+1}\right) ,
\\&\quad \quad\quad \quad\quad\vdots
\\&f_{m}^{j, k}\left(y_{m}\right)=e^{-i \frac{\delta}{r} \nu_{k}^{m} \cdot y_{m}} \chi_{B^{\prime}}\left(y_{m}\right),
\end{align*}
and
$$g^{i, k}(y_{i})=e^{i \frac{\delta}{r} y_{i}\vec{\nu_{k}} } \chi_{B^{\prime}}(y_{i}),$$
 Set $C=\delta^{-m n}| B(0,1)|^{-m}$. Then
\begin{align*}
&\int_{B^{m}} \Big|\sum_{i=1}^{m}(b_{i}(x_{i})-(b_{i})_{B^{\prime}})\Big| d \vec{x}
\\&= \delta^{-m n} r^{m n} \sum_{i=1}^{m} \frac{\sum_{k} a_{k}}{|B|}\int_{(\mathbb{R}^{n})^{m+1}}((b_{i}(y_{i})-b_{i}(x_{i}))+\sum_{j \neq i}(b_{j}(y_{i})-b_{j}(y_{j})))
\\&\qquad\times K(y_{i}-x_{i}, y_{i}-y_{1},\cdots, y_{i}-y_{i-1}, y_{i}-y_{i+1}, \ldots, y_{i}-y_{m})
\\&\qquad\times f_{1}^{i, k}(x_{i}) f_{2}^{i, k}(y_{1}) \cdots f_{i}^{i, k}(y_{i-1}) f_{i+1}^{i, k}(y_{i+1}) \cdots f_{m}^{i, k}(y_{m}) g^{i, k}(y_{i}) d \vec{y} d x_{i}
\\&\leq C\sum_{i=1}^{m} \sum_{k}|a_{k}| |B|^{m-1} \int_{\mathbb{R}^{n}}|[ \vec{b}, T](f_{1}^{i, k}, \ldots, f_{m}^{i, k})(y_{i})||g^{i, k}(y_{i})| d y_{i}
\\&\leq C \sum_{i=1}^{m} \sum_{k}|a_{k}| |B|^{m} \varphi(B)^{1/q}\|[ \vec{b}, T](f_{1}^{i, k}, \ldots, f_{m}^{i, k})\|_{L^{(q,\varphi)}}
\\&\leq  C|B|^{m}\varphi(B)^{1/q}\|[\Sigma \vec{b}, T]\|_{L^{(p_{1},\varphi_1)} \times \cdots \times L^{(p_{m},\varphi_m)} \rightarrow L^{(q,\varphi)}} \sum_{k}|a_{k}|\prod_{j=1}^{m} \parallel f_j^{i,k}\parallel_{L^{(p_j,\varphi_j)}}.
\end{align*}
Since $\varphi_j$ is in $\mathcal{G}^{dec}$ and satisfies (\ref{eq1.10}), (\ref{eq1.12}), then  $\| f_j^{i,k}\|_{L^{(p_j,\varphi_j)}}=\| \chi_{B'}\|_{L^{(p_j,\varphi_j)}}\sim\frac{1}{\varphi(B')^{1/p_j}}$. Note that$~\varphi_j(B')\sim\varphi_j(B)$, since $| x_0-y_0 |=r| z_1| $. Then by (\ref{eq1.13}), we get
$\prod_{j=1}^{m} \| f_j^{i,k}\|_{L^{(p_j,\varphi_j)}}\lesssim\prod_{j=1}^{m}{\varphi_j(B)^{-1/p_j}}=\varphi(B)^{-1/p}$.

Consequently,
$$\int_{B^{m}} |\sum_{i=1}^{m}(b_{i}(x_{i})-(b_{i})_{B^{\prime}})| d \vec{x}
\lesssim \|[ \vec{b}, T]\|_{L^{(p_{1},\varphi_1)} \times \cdots \times L^{(p_{m},\varphi_m)} \rightarrow L^{(q,\varphi)}} |B|^m\varphi (B)^{1/q-1/p},$$
which implies that
\begin{align*}
\frac{1}{\psi (B)} \fint_{B^{m}}|\sum_{i=1}^{m}(b_{i}(x_{i})-b_B^{i})| d \vec{x}
& \leq \frac{2m}{\psi (B)} \fint_{B^{m}}|\sum_{i=1}^{m}(b_{i}(x_{i})-b_{B^{\prime}}^{i})| d \vec{x}
\\& \lesssim\|[\vec{b}, T]\|_{L^{(p_{1},\varphi_1)} \times \cdots \times L^{(p_{m},\varphi_m)} \rightarrow L^{(q,\varphi)}},
\end{align*}
where the last inequality follows from (\ref{eq1.16}). By Lemma $\ref{lm3.5}$, for all $j=1,\ldots,m$, we have $\| b_j\|_{\mathcal{L}^(1,\psi)}\lesssim\|[b_j, T]\|_{L^{(p_{1},\varphi_1)} \times \cdots \times L^{(p_{m},\varphi_m)} \rightarrow L^{(q,\varphi)}}$
and complete the proof of Theorem$~\ref{the1.6}$.
\end{proof}


\end{document}